\newtheorem{theorem}[subsection]{Theorem}
\newtheorem{lemma}[subsection]{Lemma}
\newtheorem{corollary}[subsection]{Corollary}
\newtheorem{prop}[subsection]{Proposition}
\theoremstyle{definition}
\newtheorem{definition}[subsubsection]{Definition}
\newtheorem{remark}[subsection]{Remark}
\newcommand{\dist}{\mathrm{dist}}
\newcommand{\Sect}{\mathrm{Sect}}
\newcommand{\hull}{\mathrm{hull}}
\newcommand{\diam}{\mathrm{diam}}
\newcommand{\spt}{\mathrm{spt}}
\newcommand{\Ric}{\mathrm{Ric}}
\newcommand{\sn}{\mathrm{sn}}
\newcommand{\bR}{\mathbb{R}}
\newcommand{\bH}{\mathbb{H}}
\title{The PPW conjecture in curved spaces}
\author{Nick Edelen}
\address{MIT Department of Mathematics, 77 Massachusetts Ave, Cambridge, MA, 02139}
\email{nedelen@mit.edu}
\begin{document}

\begin{abstract}
In Euclidean (\cite{ashbaugh-benguria}) and Hyperbolic (\cite{benguria-linde}) space, and the round hemisphere (\cite{ashbaugh-benguria-sphere}), geodesic balls maximize the gap $\lambda_2 - \lambda_1$ of Dirichlet eigenvalues, among domains with fixed $\lambda_1$.  We prove an upper bound on $\lambda_2 - \lambda_1$ for domains in manifolds with certain curvature bounds.  The inequality is sharp on geodesic balls in spaceforms.
\end{abstract}

\maketitle

\section{Introduction}

In the '90s Ashbaugh-Benguria \cite{ashbaugh-benguria} settled the following conjecture of Payne, Polya and Weinberger.

\begin{theorem}[PPW conjecture, \cite{ashbaugh-benguria}]\label{theorem:ppw-Rn}
Among all bounded domains in $\bR^n$, the round ball uniquely maximizes the ratio $\frac{\lambda_2}{\lambda_1}$ of first and second Dirichlet eigenvalues.
\end{theorem}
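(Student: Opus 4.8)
The plan is to normalize via scaling, reduce the ratio bound to a bound on the spectral gap $\lambda_2-\lambda_1$, estimate the gap with carefully chosen trial functions built from the eigenfunctions of the ball, and then push everything down to the ball by symmetrization together with Chiti's comparison theorem. First I would reduce to $\Omega$ connected: for disconnected $\Omega$ some component $\Omega'$ has $\lambda_1(\Omega')=\lambda_1(\Omega)$ and $\lambda_2(\Omega)\le\lambda_2(\Omega')$, so the connected case gives the inequality and equality is excluded because a proper multi-component domain is not a ball. For $\Omega$ connected, $\lambda_1$ is simple with a positive eigenfunction $u_1$, which I normalize by $\int_\Omega u_1^2=1$. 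Since $\lambda_2/\lambda_1$ is scale invariant I may fix the ball $B_R$ with $\lambda_1(B_R)=\lambda_1(\Omega)$; as $\lambda_2/\lambda_1=1+(\lambda_2-\lambda_1)/\lambda_1$, it then suffices to prove $\lambda_2(\Omega)-\lambda_1(\Omega)\le\lambda_2(B_R)-\lambda_1(B_R)$.

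\emph{The trial functions and the gap bound.} Let $z>0$ be the radial profile of the first Dirichlet eigenfunction of $B_R$ (so $z(R)=0$) and let $\rho(|x|)\,x_i/|x|$ be a second Dirichlet eigenfunction of $B_R$; thus $\rho(0)=\rho(R)=0$ and $\rho>0$ on $(0,R)$. Put $g:=\rho/z$ on $[0,R]$, a finite positive function with $g(0)=0$ and $g(R):=\lim_{r\to R^-}\rho/z\in(0,\infty)$, and extend $g$ by the constant $g(R)$ for $r>R$. For a point $p$ to be chosen later, consider the $n$ functions
\[
\phi_i(x)=g(|x-p|)\,\frac{(x-p)_i}{|x-p|}\,u_1(x),\qquad i=1,\dots,n.
\]
A degree-theoretic argument selects $p$ with $\int_\Omega\phi_i\,u_1=0$ for every $i$: the vector field $p\mapsto\int_\Omega g(|x-p|)\,\frac{x-p}{|x-p|}\,u_1^2\,dx$ is continuous and, since $g\ge0$ is bounded with $g(0)=0$, it points inward on every sufficiently large sphere, hence vanishes somewhere inside. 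With such a $p$, each $\phi_i$ is admissible in the variational principle for $\lambda_2$; substituting and integrating by parts against $-\Delta u_1=\lambda_1u_1$ gives $\lambda_2-\lambda_1\le\int_\Omega|\nabla(\phi_i/u_1)|^2u_1^2\big/\int_\Omega(\phi_i/u_1)^2u_1^2$ for each $i$, and summing over $i$, with $\sum_i|\nabla(g(r)x_i/r)|^2=g'(r)^2+(n-1)g(r)^2/r^2=:B(r)$ and $r=|x-p|$, yields
\[
\lambda_2-\lambda_1\ \le\ \frac{\int_\Omega B(|x-p|)\,u_1^2\,dx}{\int_\Omega g(|x-p|)^2\,u_1^2\,dx}.
\]

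\emph{Monotonicity, symmetrization, and the Chiti comparison.} The analytic core is to establish, from the Bessel-function structure of $z$ and $\rho$, that on $[0,\infty)$ the function $g$ is nondecreasing, $B$ is nonincreasing, and $B/g^2=(g'/g)^2+(n-1)/r^2$ is nonincreasing (on $(R,\infty)$ all three are immediate from the constant extension, since there $B=(n-1)g(R)^2/r^2$). Granting these, I recentre at $p$ and symmetrize: replace $\Omega$ by the ball $\Omega^*$ centred at the origin with $|\Omega^*|=|\Omega|$ and $u_1$ by its symmetric decreasing rearrangement $u_1^*$. By the Hardy--Littlewood inequality the monotonicity of $B$ bounds the numerator above and that of $g$ bounds the denominator below, so $\lambda_2-\lambda_1\le\int_{\Omega^*}B\,(u_1^*)^2\big/\int_{\Omega^*}g^2(u_1^*)^2$. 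By Faber--Krahn, $\lambda_1(\Omega^*)\le\lambda_1(\Omega)=\lambda_1(B_R)$, whence $B_R\subseteq\Omega^*$ (smaller balls have larger first eigenvalue), and Chiti's comparison theorem says that, with the norms matched, $u_1^*$ and the ($0$-extended) function $z$ cross exactly once, with $u_1^*\le z$ on a central ball and $u_1^*\ge z$ outside it. Combining this single-crossing information with $B/g^2$ nonincreasing---the quotient above being a weighted average of $B/g^2$---one concludes
\[
\lambda_2-\lambda_1\ \le\ \frac{\int_{B_R}B\,z^2}{\int_{B_R}g^2\,z^2}\ =\ \lambda_2(B_R)-\lambda_1(B_R),
\]
the last equality because $g(|x|)\,(x_i/|x|)\,z(|x|)=\rho(|x|)\,x_i/|x|$ is precisely the second eigenfunction of $B_R$. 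This is the required inequality; chasing equality cases (equality in the symmetrization forces $\Omega$ to be a ball and equality in Chiti's step forces $u_1=z$) shows it is strict unless $\Omega$ is a ball, giving uniqueness.

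\emph{Where the difficulty is.} The variational and symmetrization scaffolding is soft; the genuine obstacles are two. First, the monotonicity of $g$, $B$, and especially of $B/g^2$ is a delicate statement about ratios of Bessel functions---equivalently, a Riccati comparison for the radial eigenvalue equations---and resists any purely soft argument. Second, upgrading Chiti's single-crossing estimate to the final ratio inequality needs a genuine rearrangement argument rather than a pointwise comparison. I expect essentially all of the work to concentrate in these two places.
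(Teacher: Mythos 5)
This theorem is quoted from Ashbaugh--Benguria rather than reproved in the paper, and your outline is essentially their argument, which is also the template the paper generalizes: trial functions $g(|x-p|)\,(x-p)_i/|x-p|\cdot u_1$ with a topological centering of $p$, the gap bound from min-max, the monotonicity of $g$ and of $B=g'^2+(n-1)g^2/r^2$, decreasing rearrangement, and Chiti's comparison. The reduction to the gap, the choice of $p$, the computation of $\sum_i|\nabla P_i|^2$, the Hardy--Littlewood step (which, for the denominator, also needs the observation that the increasing rearrangement of $g(|x-p|)^2|_\Omega$ dominates $g(|x|)^2$ on $\Omega^*$, since $|B_\rho(p)\cap\Omega|\le|B_\rho|$), and the identification of the two hard analytic ingredients are all correct.

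The one step that does not go through as written is the final passage from $\Omega^*$ to $B_R$. You propose to view the quotient as a weighted average of the decreasing function $B/g^2$ against the weights $g^2(u_1^*)^2$ and $g^2z^2$ and to conclude from the single crossing of $u_1^*$ and $z$; but single crossing does not by itself give the stochastic dominance of one normalized weight over the other that a weighted-average comparison requires (the two weights have different total masses, and the comparison of their tails past the crossing point is not determined). The standard finish treats numerator and denominator separately: since $\|u_1^*\|_{L^2}=\|z\|_{L^2}$ and $z-u_1^*$ changes sign exactly once, at $r_0$ say, one has $(z^2-(u_1^*)^2)(F-F(r_0))\ge0$ pointwise for any monotone decreasing $F$, whence $\int F\,(u_1^*)^2\le\int F\,z^2$ (apply with $F=B$), and the reverse inequality for $F$ increasing (apply with $F=g^2$); this is exactly Corollary \ref{corollary:chiti}. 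With that substitution your outline matches the known proof; the equality analysis you defer at the end is also where the real work for the uniqueness assertion lies.
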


Given a bounded domain $\Omega \subset \bR^n$, the Dirichlet eigenvalues $\lambda_i = \lambda_i(\Omega)$ are solutions to the PDE
\begin{equation}\label{eqn:dirichlet-pde}
\Delta u + \lambda_i u = 0 \text{ in $\Omega$}, \quad u = 0 \text{ on $\partial\Omega$},
\end{equation}
where $\Delta$ denotes the usual Laplacian $\sum_k \partial_k^2$.

Physically the $\lambda_i$ correspond to harmonics in a flat drum of shape $\Omega$, so Theorem \ref{theorem:ppw-Rn} says that one can tell whether a drum is circular by listening to only the first two harmonics.  As an aside we mention that Theorem \ref{theorem:ppw-Rn} is very unstable: by gluing balls of various radii together with thin strips, one can construct domains with ratio $\lambda_2/\lambda_1$ arbitrarily close to the maximum, but which are far from being circular.

Payne-Polya-Weinberger \cite{ppw} originally bounded the ratio $\lambda_2/\lambda_1$ by $3$.  Their bound was subsequently improved by Brands \cite{brands}, de Vries \cite{devries}, then Chiti \cite{chiti}, until Ashbaugh-Benguria proved the sharp inequality, building on the work of Chiti and Talenti \cite{talenti}.  For more history and references see \cite{ashbaugh-benguria}.

If one considers the problem \eqref{eqn:dirichlet-pde} for domains in a curved space $M$, with the corresponding metric Laplacian, one is effectively considering harmonics on a drum with tension.  Benguria-Linde \cite{benguria-linde} extended the PPW conjecture to hyperbolic space.
\begin{theorem}[PPW for hyperbolic space, \cite{benguria-linde}]\label{theorem:ppw-Hn}
Among all bounded domains in $\bH^n$ with the same fixed first Dirichlet eigenvalue $\lambda_1$, the geodesic ball maximizes $\lambda_2$.
\end{theorem}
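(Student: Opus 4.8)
\emph{Proof proposal.} The plan is to run the Ashbaugh--Benguria scheme, with the hyperbolic geometry entering only through the warping profile $\sn(r) = \sinh r$ of geodesic spheres. Fix a bounded $\Omega \subset \bH^n$ with positive first eigenfunction $u_1$, and let $B^* \subset \bH^n$ be the geodesic ball, of radius $R^*$, with $\lambda_1(B^*) = \lambda_1(\Omega)$; it suffices to prove $\lambda_2(\Omega) - \lambda_1(\Omega) \le \lambda_2(B^*) - \lambda_1(B^*)$. For a basepoint $o$, use geodesic polar coordinates $(r,\theta)$ with $r = \dist(o,\cdot)$, $\theta \in S^{n-1}$, and set $P_i = g(r)\,\theta_i$ for a profile $g$ with $g(0) = 0$, to be chosen. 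A Brouwer degree argument in the basepoint produces an $o$ with $\int_\Omega P_i u_1^2 = 0$ for all $i$, so each $\phi_i := P_i u_1 \in H^1_0(\Omega)$ is $L^2$-orthogonal to $u_1$ and hence $\lambda_2(\Omega) \le \int_\Omega |\nabla\phi_i|^2 / \int_\Omega \phi_i^2$. Since $\sum_i \theta_i^2 = 1$ and the $\theta_i$ restrict to first spherical harmonics, one has $\sum_i P_i^2 = g^2$ and $\sum_i |\nabla P_i|^2 = g'(r)^2 + (n-1)g(r)^2/\sn(r)^2 =: B(r)$; inserting $\phi_i$ into the Rayleigh quotient and integrating by parts against $\Delta u_1 = -\lambda_1 u_1$ cancels the cross terms, leaving
\[
\big(\lambda_2(\Omega) - \lambda_1(\Omega)\big)\int_\Omega g(r)^2\,u_1^2 \;\le\; \int_\Omega B(r)\,u_1^2 .
\]

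Next I would fix the profile: let $z$ and $y$ be the radial parts on $B^*$ of the first eigenfunction and of an ``angular'' second eigenfunction $y(r)\theta_i$, set $g = y/z$ on $[0,R^*]$, and extend $g$ by the constant $g(R^*)$ beyond $R^*$. With this $g$ and $u_1 = z$ one has $\phi_i = y\theta_i$, so the displayed inequality becomes the equality $\int_0^{R^*} B z^2 \sn^{n-1}\,dr = (\lambda_2(B^*)-\lambda_1(B^*))\int_0^{R^*} g^2 z^2 \sn^{n-1}\,dr$; it remains to show that for every admissible $u_1$ the ratio $\int_\Omega B u_1^2 / \int_\Omega g^2 u_1^2$ does not exceed its value at $z$. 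The two structural facts that make this work are that $g$ is strictly increasing and $B$ strictly decreasing on $(0,\infty)$. In $\bH^n$ the profiles $z,y$ solve hypergeometric-type (associated Legendre) ODEs rather than Bessel's equation, so these monotonicities must be extracted from the ODEs directly --- e.g.\ via Riccati substitutions for $g'/g$ and for $B$, combined with comparison/maximum-principle arguments --- with careful bookkeeping of the curvature terms.

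Then I would symmetrize: let $u_1^\star$ be the decreasing radial rearrangement of $u_1$ onto the geodesic ball $\Omega^\star$ of the same hyperbolic volume, built from the hyperbolic isoperimetric inequality and the coarea formula. Faber--Krahn forces $\lambda_1(\Omega^\star) \le \lambda_1(\Omega) = \lambda_1(B^*)$, so $R^\star \ge R^*$ and the extended $g$ is defined on all of $[0,R^\star]$. Talenti's pointwise comparison on the super-level sets of $u_1$, together with the normalization $\|u_1^\star\|_{L^2} = \|z\|_{L^2(B^*)}$, gives the Chiti single-crossing: there is $r_0 \in (0,R^*)$ with $u_1^\star \ge z$ on $[0,r_0]$ and $u_1^\star \le z$ on $[r_0,R^*]$ (and $z \equiv 0$ on $[R^*,R^\star]$). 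Passing to polar coordinates and invoking that $B$ is decreasing, $g^2$ increasing, the crossing at $r_0$, and the normalization, the standard weighting estimate yields
\[
\frac{\int_\Omega B\,u_1^2}{\int_\Omega g^2\,u_1^2}
= \frac{\int_0^{R^\star} B\,(u_1^\star)^2\,\sn^{n-1}\,dr}{\int_0^{R^\star} g^2\,(u_1^\star)^2\,\sn^{n-1}\,dr}
\;\le\; \frac{\int_0^{R^*} B\,z^2\,\sn^{n-1}\,dr}{\int_0^{R^*} g^2\,z^2\,\sn^{n-1}\,dr}
= \lambda_2(B^*) - \lambda_1(B^*),
\]
which closes the chain; tracking equality back through the rearrangement and the weighting step forces $\Omega$ to be a geodesic ball.

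The hard part is the monotonicity in the second step: in $\bR^n$, Ashbaugh--Benguria read off ``$g$ increasing, $B$ decreasing'' from explicit Bessel-function identities, whereas in $\bH^n$ one has only the associated Legendre ODE, so the crucial new ingredient is a robust, curvature-insensitive proof of these monotonicities (through Riccati and maximum-principle arguments rather than special-function manipulation). A secondary, more routine point is checking that Schwarz symmetrization, Faber--Krahn and the Talenti/Chiti comparison all go through with the hyperbolic volume density $\sn^{n-1}$, which in particular needs $R^\star \ge R^*$ so that the extended $g$ is available on the whole symmetrized ball.
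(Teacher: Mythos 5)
Your proposal is the Ashbaugh--Benguria/Benguria--Linde scheme, which is also exactly the skeleton of this paper: specializing Theorem \ref{theorem:balancing-estimate} and Corollary \ref{corollary:balancing-chiti-estimate} to $M = N(k) = \bH^n$ gives $\sigma(r) = r$, $C_1 = 1$, and $h = J/z$, i.e.\ precisely your argument. So the route is right, but two points need attention. First, you have stated the Chiti single-crossing backwards: the correct comparison (Theorem \ref{theorem:chiti} here, and Chiti's original) is $z \geq u_1^\star$ on $[0,r_0]$ and $z \leq u_1^\star$ on $[r_0,R^\star]$, not the reverse. The direction matters: with $B$ decreasing one needs $(z^2 - (u_1^\star)^2)(B - B(r_0)) \geq 0$ pointwise to conclude $\int B\,(u_1^\star)^2 \leq \int B\,z^2$, and with $g^2$ increasing the same crossing gives $\int g^2 (u_1^\star)^2 \geq \int g^2 z^2$; your stated crossing would reverse both inequalities. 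It is also internally inconsistent, since on $[R^*,R^\star]$ you have $u_1^\star \geq 0 = z$, which together with ``$u_1^\star \le z$ on $[r_0,R^*]$'' would force a second crossing. (Intuitively $u_1^\star$ lives on the larger ball $R^\star \geq R^*$, so it must lag $z$ near the origin and exceed it near the boundary.)

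Second, you correctly identify that everything hinges on $g = y/z$ being increasing and $B = g'^2 + (n-1)g^2/\sinh^2 r$ decreasing, but you only gesture at Riccati and maximum-principle arguments. This is not a routine verification: it is the central technical contribution of Benguria--Linde (their Lemma 7.1, occupying most of their paper), involving delicate analysis of the associated Legendre/Gegenbauer ODEs. As a proposal your deferral is legitimate, but be aware that this step, not the symmetrization bookkeeping, is where the proof of Theorem \ref{theorem:ppw-Hn} actually lives; the present paper likewise does not reprove it but imports it as a black box.
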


In $\bR^n$ the ratio $\lambda_2/\lambda_1$ is scale-invariant, but in other spaces the appropriate inequality requires one to normalize competitors by $\lambda_1$.  Ashbaugh-Benguria \cite{ashbaugh-benguria-sphere} also extended the PPW conjecture to the hemisphere in $S^n$.
\begin{theorem}[PPW for hemispheres, \cite{ashbaugh-benguria-sphere}]\label{theorem:ppw-Sn}
Among all bounded domains in the hemisphere of $S^n$ with the same fixed Dirichlet eigenvalue $\lambda_1$, the geodesic ball maximizes $\lambda_2$.
\end{theorem}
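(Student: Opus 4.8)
The plan is to adapt the Payne--Polya--Weinberger and Ashbaugh--Benguria machinery to the round sphere, tracking where the curvature of $S^n$ and the restriction to a hemisphere are used. Take $S^n$ the unit sphere, write $\sn(r)=\sin r$, and fix a bounded domain $\Omega$ in the open hemisphere $S^n_+$ with first Dirichlet eigenfunction $u_1>0$ and eigenvalue $\lambda_1$. Let $B=B_R$ be the geodesic ball with $\lambda_1(B_R)=\lambda_1$, well defined since $R\mapsto\lambda_1(B_R)$ is continuous and strictly decreasing. Two preliminary facts I would record. (i) All balls in play fit in the closed hemisphere: if $\Omega^\ast=B_{R^\ast}$ is the geodesic ball with $|\Omega^\ast|=|\Omega|\le|S^n_+|$ then $R^\ast\le\pi/2$, while Faber--Krahn on $S^n$ plus domain monotonicity give $\lambda_1=\lambda_1(\Omega)\ge\lambda_1(\Omega^\ast)$, hence $R\le R^\ast\le\pi/2$; so $B\subseteq\Omega^\ast\subseteq B_{\pi/2}$ and $\sn$ is nondecreasing and concave on every interval that arises. (ii) For balls in the hemisphere $\lambda_2(B)$ comes from the first nonradial mode, $\lambda_2(B)=\lambda_{1,1}(B)$, so it has multiplicity $n$ with eigenfunctions $w(r)\,\Theta_i$, where $\Theta_1,\dots,\Theta_n$ are the restrictions to $S^{n-1}$ of the ambient coordinate functions and $w$ solves the radial equation carrying the angular eigenvalue $n-1$; this is itself an ODE comparison, again using $R\le\pi/2$.

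Next I would build the test functions for $\lambda_2$. Let $z(r)$ be the radial first eigenfunction of $B$, set $g=w/z$ — so $g(0)=0$ and $g(R)=w'(R)/z'(R)>0$ — and extend $g$ by the constant $g(R)$ for $r\ge R$, making $g$ Lipschitz on $[0,\pi)$. For $p\in S^n_+$ write $r_p(x)=\dist(p,x)$ and let $\Theta_{p,i}(x)$ be the $i$-th coordinate (in a fixed orthonormal frame) of the unit initial velocity $\Phi_p(x)\in T_pS^n$ of the geodesic from $p$ to $x$; put $P_i=g(r_p)\,\Theta_{p,i}$. I would choose the center $p$ by a topological argument: the tangent vector field $p\mapsto W(p)=\int_\Omega g(r_p)\,\Phi_p\,u_1^2\,dV$ is continuous on $\overline{S^n_+}$ (the singularity of $\Phi_p$ at $x=p$ is killed by $g(0)=0$, and $-p\notin\overline\Omega$), and along the equator $\partial S^n_+$ it points strictly into the hemisphere, hence it vanishes at an interior point, because a vector field on a disk that points inward along its boundary has nonzero index sum — this is where the hemisphere removes the usual parity obstruction in odd dimensions. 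For this $p$ one has $P_iu_1\perp u_1$ in $L^2(\Omega)$. Testing $\lambda_2$ with $P_iu_1$, using the identity
\[\int_\Omega|\nabla(P_iu_1)|^2-\lambda_1\int_\Omega(P_iu_1)^2=\int_\Omega|\nabla P_i|^2\,u_1^2\]
(integration by parts together with $\Delta u_1=-\lambda_1u_1$), and the pointwise identities $\sum_i\Theta_{p,i}^2=1$ and $\sum_i|\nabla_{S^{n-1}}\Theta_i|^2=n-1$, summation over $i$ yields the gap inequality
\[\lambda_2-\lambda_1\ \le\ \frac{\displaystyle\int_\Omega\Big(g'(r_p)^2+(n-1)\frac{g(r_p)^2}{\sn(r_p)^2}\Big)u_1^2}{\displaystyle\int_\Omega g(r_p)^2\,u_1^2}\ =:\ \frac{\int_\Omega B_g(r_p)\,u_1^2}{\int_\Omega g(r_p)^2\,u_1^2}.\]

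The choice $g=w/z$ makes the ball the equality case: $g$ is the ground state of the singular Sturm--Liouville problem
\[-\frac{1}{\sn^{n-1}z^2}\big(\sn^{n-1}z^2\,g'\big)'+\frac{n-1}{\sn^2}\,g\ =\ \big(\lambda_2(B)-\lambda_1\big)\,g\quad\text{on }(0,R),\]
so for $\Omega=B$ the right-hand side of the gap inequality equals $\lambda_2(B)-\lambda_1$ exactly. The analytic heart is then a package of monotonicity statements for $g$ on $[0,\pi/2]$: $g$ is nondecreasing and bounded, $g^2$ is nondecreasing, and — in the appropriate combination with the concavity of $\sn$ — the coefficient $B_g(r)=g'(r)^2+(n-1)g(r)^2/\sn(r)^2$ is nonincreasing (with a downward jump across $r=R$ after the constant extension). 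These I would prove by Sturm-type comparison arguments on the equations satisfied by $z$, $w$ and $g$. I expect this to be the main obstacle: in $\bR^n$ and $\bH^n$ these are classical monotonicity properties of ratios of Bessel-type solutions, but on $S^n$ the comparisons are genuinely more delicate, and beyond the hemisphere both the monotonicities and the theorem itself fail, so one must use $R\le\pi/2$ in an essential way.

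Finally I would carry out the symmetrization comparison. With $\Omega^\ast=B_{R^\ast}$ the equal-volume ball and $u_1^\ast$ the symmetric decreasing rearrangement of $u_1$, all radii lie in $[0,\pi/2]$, where $B_g$ is nonincreasing and $g^2$ nondecreasing; Hardy--Littlewood applied with $B_g$, and a companion rearrangement inequality (via the bathtub principle) applied with $g^2$, reduce the gap quotient to a one-dimensional quotient,
\[\frac{\int_\Omega B_g(r_p)\,u_1^2}{\int_\Omega g(r_p)^2\,u_1^2}\ \le\ \frac{\int_0^{R^\ast}B_g(r)\,u_1^\ast(r)^2\,\sn(r)^{n-1}\,dr}{\int_0^{R^\ast}g(r)^2\,u_1^\ast(r)^2\,\sn(r)^{n-1}\,dr}.\]
Chiti's comparison theorem — Talenti's pointwise rearrangement estimate for the eigenfunction together with the eigenvalue ODE — then says that, after normalizing $\|u_1\|_{L^2(\Omega)}=\|z\|_{L^2(B)}$, the profile $u_1^\ast$ is dominated by $z$ near the origin and crosses it exactly once; since $B_g$ is nonincreasing and $g^2$ nondecreasing, this bounds the one-dimensional quotient above by the same quotient with $u_1^\ast$ replaced by $z$ and the range by $(0,R)$, which is exactly $\lambda_2(B)-\lambda_1$. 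Chaining the inequalities gives $\lambda_2(\Omega)\le\lambda_2(B)$, and tracing the equality cases (of Faber--Krahn and Chiti, equivalently of Talenti, and of the rearrangement step) forces $\Omega$ to be a geodesic ball. Besides the ODE monotonicities, the remaining delicate point is making the selection of $p$ and the rearrangement step rigorous when $\Omega$ nearly fills the hemisphere, where $r_p$ can approach $\pi$ and $B_g$ grows again on $(\pi/2,\pi)$; controlling this tail requires an extra estimate whose viability rests on the confinement of $B$, $\Omega^\ast$ and the relevant range to $[0,\pi/2]$, i.e.\ on the hemisphere hypothesis.
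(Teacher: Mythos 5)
This statement is not proved in the paper: it is quoted verbatim from Ashbaugh--Benguria \cite{ashbaugh-benguria-sphere}, and the paper only re-uses its key lemma (their Theorem~4.1, on the monotonicity of $h=J/z$ and of $F$) inside the proof of Theorem \ref{theorem:eigenvalue-estimate}. Your outline is exactly the Ashbaugh--Benguria strategy, and also the strategy that Theorem \ref{theorem:eigenvalue-estimate} generalizes (with $\alpha=1$, $C_1=1$): topological choice of the center $p$, test functions $g(r_p)\Theta_{p,i}$ with $g=w/z$, the gap quotient, rearrangement in the two directions for the decreasing weight $B_g$ and the increasing weight $g^2$, and the Chiti crossing argument; all of these steps are set up correctly, including the role of $R\le R^\ast\le\pi/2$. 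The caveat is that what you present is a roadmap rather than a proof: the three decisive analytic inputs --- that $\lambda_2(B_R)$ is the first nonradial mode for $R\le\pi/2$, that $g$ is increasing and $B_g$ decreasing on $[0,\pi/2]$, and the spherical Chiti comparison --- are asserted (with the correct attribution of difficulty) but not carried out, and on $S^n$ these constitute essentially all of the work in \cite{ashbaugh-benguria-sphere}. As an account of the architecture of the proof, however, it is accurate and consistent with the machinery developed in Sections 2--4 of this paper.
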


In this paper we seek to prove a general upper bound, in terms of geometric quantities, on the gap $\lambda_2 - \lambda_1$ for a bounded domain in a manifold $M$, and which reduces to the inequalities \ref{theorem:ppw-Rn}, \ref{theorem:ppw-Hn}, \ref{theorem:ppw-Sn} when $M$ is a spaceform.  The case of warped product manifolds has been considered by Miker \cite{miker} in her thesis, though we find her result less geometrically intuitive.

Before stating our Theorem we introduce some notation.  Given a Riemannian manifold $M^n$, we write $\Sect_M$, $\Ric_M$ for the sectional, Ricci curvatures (respecively).  Given a bounded domain $\Omega \subset M$, write $|\Omega|_M$ for the $n$-dimensional volume of $\Omega$, $|\partial \Omega|_M$ for the $(n-1)$ Hausdorff measure of $\partial \Omega$, and $\diam(\Omega)$ for the diameter of $\Omega$, each taken with respect to $M$'s Riemannian metric.

Let $N^n(k)$ be the spaceform of constant sectional curvature $k$.  Define the generalized sine function $\sn_k$ on $\bR$ by
\[
\sn_k(r) = \left\{ \begin{array}{l l} \frac{1}{\sqrt{k}} \sin (\sqrt{k} r) & k > 0 \\
r & k = 0 \\ \frac{1}{\sqrt{-k}} \sinh (\sqrt{-k} r) & k < 0 \end{array} \right. .
\]

The following isoperimetric inequality holds for any bounded domain $\Omega \subset N^n(k)$:
\begin{equation}\label{eqn:isop-ineq}
|\partial\Omega|_N \geq A_{n,k}(|\Omega|_N),
\end{equation}
with equality iff $\Omega$ is a geodesic ball (see \cite{schmidt}).

Fix (for the duration of this paper) $M^n$ to be a complete, simply-connected $n$-manifold with $\Sect_M \leq k$.  Then for some $\alpha \leq 1$, $M$ satisfies an isoperimetric inequality
\begin{equation}\label{eqn:weak-isop-ineq}
|\partial\Omega|_M \geq \alpha A_{n,k}(|\Omega|_M)
\end{equation}
for any bounded domain $\Omega$.  We assume throughout this paper that $\alpha > 0$, which is no real loss of generality as we only concern ourselves with a compact neighborhood of $\Omega$.

If $k \leq 0$ then $\Omega$ has a closed geodesic convex hull, which we write as $\hull\Omega$.  Using elementary comparison geometry one can verify that $\diam(\Omega) = \diam(\hull\Omega)$.

If $k > 0$, we impose the condition on $\Omega$ that we can find some strongly convex closed set, which we also write as $\hull\Omega$, containing $\Omega$ and satisfying the following properties
\begin{enumerate}
\item[A)] $\diam\Omega = \diam(\hull\Omega) < \min\{\frac{\pi}{2\sqrt{k}}, \text{injectivity radius of $M$}\},$
\item[B)] $|\hull\Omega|_M < |N(k)|_{N(k)}/2$.
\end{enumerate}
By strongly convex we mean that the minimizing geodesic connecting any two points in $\hull\Omega$ itself lies in $\hull\Omega$.  We require A) so that the exponential function $\exp_p$ is a diffeomorphism onto $\hull\Omega$, for any $p \in \hull\Omega$; we require B) so that we can ultimately work in the hemisphere of $N$.

We extend Theorems \ref{theorem:ppw-Rn}, \ref{theorem:ppw-Hn}, \ref{theorem:ppw-Sn} to prove the following inequality for the gap $\lambda_2 - \lambda_1$.
\begin{theorem}\label{theorem:eigenvalue-estimate}
Let $\Omega$ be a bounded domain in $M^n$.  If $k > 0$ let $\Omega$ be such that some $\hull\Omega$ exists.  Let $B_{\alpha,\Omega}$ be a geodesic ball in $N^n(k)$, normalized so that $\lambda_1(B_{\alpha,\Omega}) = \alpha^{-2}\lambda_1(\Omega)$.

If $\Ric \geq (n-1)K$ on $\hull\Omega$, then
\begin{equation}
\lambda_2(\Omega) - \lambda_1(\Omega) \leq \left( \frac{\sn_K(\diam\Omega)}{\sn_k(\diam\Omega)} \right)^{2n-2} (\lambda_2(B_{\alpha,\Omega}) - \lambda_1(B_{\alpha,\Omega})) .
\end{equation}
In particular, if $k = K$ then the constant factor is $1$, and the inequality is sharp on geodesic balls.
\end{theorem}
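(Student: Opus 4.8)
Here is how I would attack the theorem. The strategy is to adapt the Ashbaugh--Benguria trial-function argument to curved ambient spaces, with the upper bound $\Sect_M \le k$ controlling angular derivatives (and supplying, via \eqref{eqn:weak-isop-ineq}, the isoperimetric input), and the lower bound $\Ric \ge (n-1)K$ controlling volume growth through Bishop comparison. Let $u_1 > 0$ be a first Dirichlet eigenfunction of $\Omega$, and let $z,w$ be the radial profiles of the first and of the degree-one second eigenfunction of $B_{\alpha,\Omega} \subset N^n(k)$; set $g = w/z$ on $[0, R_\alpha]$, $R_\alpha := \mathrm{radius}(B_{\alpha,\Omega})$, so that $g(0) = 0$, $g > 0$ and bounded on $(0, R_\alpha]$, and $w = gz$. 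Fixing $g$, I would first use a Brouwer-degree argument --- valid because $\hull\Omega$ is strongly convex and $\exp_p$ is a diffeomorphism onto $\hull\Omega$ for each $p \in \hull\Omega$ (this is where hypotheses A) and B) are needed when $k > 0$; Cartan--Hadamard suffices when $k \le 0$) --- to pick a center $p \in \hull\Omega$ with $\int_\Omega u_1^2\, g(r)\, \gamma\, dV_M = 0 \in \bR^n$, where $r = \dist(p, \cdot)$ and $\gamma = (\gamma_1, \dots, \gamma_n)$ are the components (in an orthonormal frame at $p$) of $\exp_p^{-1}(\cdot)/|\exp_p^{-1}(\cdot)|$. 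The functions $\phi_i = u_1\, g(r)\, \gamma_i$ then lie in $H^1_0(\Omega)$ --- the condition $g(0) = 0$ is what makes them admissible near $p$ --- and are $L^2$-orthogonal to $u_1$, so each is a legitimate trial function for $\lambda_2(\Omega)$. Summing the Rayleigh quotients over $i$, integrating by parts with $\Delta u_1 = -\lambda_1 u_1$, and using $\sum_i \gamma_i^2 = 1$ and $\nabla r \perp \nabla \gamma_i$, the cross terms collapse and one is left with
\[
\lambda_2(\Omega) - \lambda_1(\Omega) \le \frac{\int_\Omega u_1^2 \big( g'(r)^2 + g(r)^2 \textstyle\sum_i |\nabla\gamma_i|^2 \big)\, dV_M}{\int_\Omega u_1^2\, g(r)^2\, dV_M}.
\]

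Next come the two comparison-geometry inputs. By the Hessian comparison theorem, $\Sect_M \le k$ forces the metric induced on each geodesic sphere $\{r = \mathrm{const}\}$ to dominate $\sn_k(r)^2$ times the round metric, so that $\sum_i |\nabla\gamma_i|^2 \le (n-1)/\sn_k(r)^2$ pointwise; this turns the numerator integrand into $u_1^2\, B_k(r)$ with $B_k(r) := g'(r)^2 + (n-1) g(r)^2/\sn_k(r)^2$. On the model side, an Ashbaugh--Benguria-type monotonicity lemma --- using the Sturm--Liouville equations for $z$ and $w$ on $N^n(k)$ --- shows that $g$ is increasing and $B_k$ decreasing on $[0, R_\alpha]$, and that the corresponding quotient on $B_{\alpha,\Omega}$ is an identity: since $w\gamma_i = gz\gamma_i$ is literally a second eigenfunction there, the trial-function bound is saturated and
\[
\frac{\int_{B_{\alpha,\Omega}} z^2\, B_k\, dV_N}{\int_{B_{\alpha,\Omega}} z^2\, g^2\, dV_N} = \lambda_2(B_{\alpha,\Omega}) - \lambda_1(B_{\alpha,\Omega}).
\]

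The heart of the proof is to pass from $\mathcal Q := \int_\Omega u_1^2 B_k(r)\, dV_M \,/\, \int_\Omega u_1^2 g(r)^2\, dV_M$ to the right-hand side above. For this I would run Talenti's symmetrization on $u_1$: feeding \eqref{eqn:weak-isop-ineq} into the coarea estimate yields a pointwise differential inequality for the decreasing radial rearrangement $u_1^\sharp$ of $u_1$ into $N^n(k)$, and the factor $\alpha$ enters exactly so that $u_1^\sharp$ is compared with the first eigenfunction of a ball in $N^n(k)$ of eigenvalue $\alpha^{-2}\lambda_1(\Omega)$ --- this is the source of the normalization defining $B_{\alpha,\Omega}$. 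Chiti's argument then gives the single-crossing property: $u_1^\sharp$ and $z$, normalized to equal $L^2$ norm, cross exactly once, with $u_1^\sharp \ge z$ initially. Combining single crossing with the monotonicity of $B_k$ (decreasing) and $g^2$ (increasing) via a rearrangement lemma of Chiti type controls $\mathcal Q$ by the quotient built from $z$ on $B_{\alpha,\Omega}$; the only further loss is the mismatch between the volume density of $M$ and that of $N^n(k)$, which by Bishop comparison ($\Ric \ge (n-1)K$) together with $\Sect_M \le k$ is pinched between $\sn_k(r)^{n-1}$ and $\sn_K(r)^{n-1}$, and whose ratio is at most $(\sn_K(\diam\Omega)/\sn_k(\diam\Omega))^{n-1}$ for $r \le \diam\Omega$ (using that $K \le k$ forces $\sn_K/\sn_k$ to be nondecreasing, so its supremum on $[0,\diam\Omega]$ is at $\diam\Omega$). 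Carrying this density mismatch through the numerator and through the rearrangement of the radial weights produces the stated constant $(\sn_K(\diam\Omega)/\sn_k(\diam\Omega))^{2n-2}$, which is $1$ when $k = K$; all inequalities become equalities when $\Omega$ is a geodesic ball in $N^n(k)$, giving sharpness.

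I expect the real difficulty to be this last step: making Talenti's symmetrization and Chiti's comparison go through with the \emph{non-sharp} isoperimetric constant $\alpha$ --- in particular verifying that the extracted comparison function is genuinely the eigenfunction of $B_{\alpha,\Omega}$ and that $B_{\alpha,\Omega}$ (defined through $\lambda_1$) interacts correctly with the rearranged domain (defined through volume) --- and then tracking the volume-density discrepancy carefully enough that precisely the power $2n-2$ emerges with no spurious loss, especially for domains not star-shaped about $p$, where the naive polar-coordinate bookkeeping fails and the rearrangement is essential. A secondary, more routine obstacle is the degree-theoretic existence of a good center $p$ when $k > 0$, which must be carried out inside the region where $\exp_p$ is a diffeomorphism --- exactly the role of hypotheses A) and B).
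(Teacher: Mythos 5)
Your architecture is the paper's: Ashbaugh--Benguria trial functions recentred by a Brouwer argument in $\hull\Omega$, Hessian/Rauch comparison under $\Sect_M\le k$ to bound the angular gradient by $(n-1)/\sn_k(r)^2$, a weak Faber--Krahn and Chiti comparison carrying the isoperimetric constant $\alpha$ (whence the normalization of $B_{\alpha,\Omega}$), and Bishop--Gromov under $\Ric\ge (n-1)K$ to produce the constant. But two points are genuinely wrong or missing. First, the crossing direction in Chiti is reversed: since $\lambda_1(B_{\alpha,\Omega})=\alpha^{-2}\lambda_1(\Omega)\ge\lambda_1(S^N\Omega)$ forces $B_{\alpha,\Omega}\subset S^N\Omega$, the function $z$ vanishes before the rearrangement of $u_1$ does, so the single crossing has $z\ge S^{\Omega,N}u_1$ near the centre and $z\le S^{\Omega,N}u_1$ near $\partial B_{\alpha,\Omega}$ (Theorem \ref{theorem:chiti}), opposite to your ``$u_1^\sharp\ge z$ initially.'' As stated, your direction reverses both Chiti-type integral inequalities (decreasing weight in the numerator, increasing weight in the denominator) and the argument breaks; with the correct direction it closes.

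Second, the step you yourself flag as the real difficulty is left as a hope, and the mechanism you propose would not go through as described. If the trial function carries $g(r_p)$ with $r_p=\dist_M(p,\cdot)$, then by Proposition \ref{prop:sym-of-sym} its symmetrization into $N(k)$ is $g$ evaluated at the \emph{volume-rearranged} radius, not at $r_q$; converting one to the other needs two-sided pointwise control of $|B_r(p)\cap\Omega|_M$ against $|B_r|_{N(k)}$, and under $\Sect_M\le k$ the inequality needed on the denominator side goes the wrong way (G\"unther gives $|B_r(p)|_M\ge |B_r|_{N(k)}$). The paper's resolution is to build the rearranged radius into the trial function from the start: setting $\sigma(r)=m_N^{-1}(|B_r(p)\cap\hull\Omega|_M)$ and taking $P=h(\sigma(r_p))\exp_p^{-1}(\cdot)/r_p$, the symmetrization of $h\circ\sigma\circ r_p$ over $\hull\Omega$ is \emph{exactly} $h(r_q)$, so Chiti applies with no loss, and the whole constant appears as $C_1^2$ from the chain rule in the gradient term, with $C_1=\max\{\sigma'(r),\ \sn_k(\sigma(r))/\sn_k(r)\}$. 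Bounding $C_1$ by $(\sn_K(\diam\Omega)/\sn_k(\diam\Omega))^{n-1}$ is itself the delicate point for domains not star-shaped about $p$: one passes to the geodesic cone over $\partial B_r(p)\cap\hull\Omega$, applies Bishop--Gromov on the cone for the upper bound and Hessian comparison for the lower volume bound, and then removes the cone-angle factor using concavity of the isoperimetric profile $A_{n,k}$. Your ``pinched volume density'' bookkeeping does predict the exponent $2n-2$, but the reparametrization by $\sigma$ and the cone-plus-concavity estimate are the missing ideas that actually make it rigorous.
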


On spaceforms (i.e. when $k = K$) Theorem \ref{theorem:eigenvalue-estimate} reduces to the sharp estimates in \cite{ashbaugh-benguria}, \cite{benguria-linde}, \cite{ashbaugh-benguria-sphere}.  In Hadamard manifolds we have a more explicit estimate, due to the scaling of $\lambda_i$ in $\bR^n$.
\begin{corollary}\label{cor:hadamard}
Suppose $k = 0$, and $\Omega$ is a bounded domain in $M^n$ so that $\Ric_M \geq (n-1)K$ on $\hull\Omega$.  Then
\[
\frac{\lambda_2(\Omega)}{\lambda_1(\Omega)} - 1 \leq \frac{1}{\alpha^2} \left( \frac{\sinh(\sqrt{-K} \diam(\Omega))}{\sqrt{-K} \diam(\Omega)} \right)^{2n-2} \left( \frac{\lambda_2(B_1^n)}{\lambda_1(B_1^n)} - 1 \right).
\]
Here $B_1^n$ is the unit ball in $\bR^n$.
\end{corollary}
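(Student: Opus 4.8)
The plan is to deduce this directly from Theorem \ref{theorem:eigenvalue-estimate} by specializing to $k=0$ and then exploiting the scale-invariance of Dirichlet eigenvalues in $\bR^n$. First I would note that the standing hypothesis $\Sect_M \le k = 0$ forces $\Ric_M \le 0$, hence $(n-1)K \le \Ric_M \le 0$ gives $K \le 0$; thus $\sn_K(r) = \frac{1}{\sqrt{-K}}\sinh(\sqrt{-K}\,r)$ and (since $k \le 0$) the convex hull $\hull\Omega$ automatically exists, so there is no extra hypothesis to verify. With $k=0$ we have $N^n(0) = \bR^n$ and $\sn_0(r) = r$, so Theorem \ref{theorem:eigenvalue-estimate} reads
\[
\lambda_2(\Omega) - \lambda_1(\Omega) \le \left( \frac{\sinh(\sqrt{-K}\,\diam\Omega)}{\sqrt{-K}\,\diam\Omega} \right)^{2n-2} \bigl( \lambda_2(B_{\alpha,\Omega}) - \lambda_1(B_{\alpha,\Omega}) \bigr),
\]
where $B_{\alpha,\Omega} \subset \bR^n$ is the Euclidean ball with $\lambda_1(B_{\alpha,\Omega}) = \alpha^{-2}\lambda_1(\Omega)$.

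Next I would invoke the fact that under a dilation $x \mapsto r x$ of $\bR^n$ the Dirichlet eigenvalues of a domain scale by $r^{-2}$. Hence for any Euclidean ball $B$ one has $\lambda_i(B) = \frac{\lambda_1(B)}{\lambda_1(B_1^n)}\,\lambda_i(B_1^n)$, and in particular
\[
\lambda_2(B_{\alpha,\Omega}) - \lambda_1(B_{\alpha,\Omega}) = \frac{\lambda_1(B_{\alpha,\Omega})}{\lambda_1(B_1^n)}\bigl(\lambda_2(B_1^n) - \lambda_1(B_1^n)\bigr) = \frac{\lambda_1(\Omega)}{\alpha^2\,\lambda_1(B_1^n)}\bigl(\lambda_2(B_1^n) - \lambda_1(B_1^n)\bigr).
\]
Substituting this into the previous display and dividing through by $\lambda_1(\Omega) > 0$ yields
\[
\frac{\lambda_2(\Omega)}{\lambda_1(\Omega)} - 1 \le \frac{1}{\alpha^2}\left( \frac{\sinh(\sqrt{-K}\,\diam\Omega)}{\sqrt{-K}\,\diam\Omega} \right)^{2n-2} \cdot \frac{\lambda_2(B_1^n) - \lambda_1(B_1^n)}{\lambda_1(B_1^n)},
\]
and rewriting the last factor as $\lambda_2(B_1^n)/\lambda_1(B_1^n) - 1$ gives the claim.

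I do not expect any genuine obstacle: the corollary is a formal consequence of Theorem \ref{theorem:eigenvalue-estimate} together with Euclidean scaling. The only minor bookkeeping point is the degenerate case $K = 0$ (i.e.\ $M$ flat), where the factor $\sinh(\sqrt{-K}\,\diam\Omega)/(\sqrt{-K}\,\diam\Omega)$ is understood as its limiting value $1$; the estimate then becomes $\frac{\lambda_2(\Omega)}{\lambda_1(\Omega)} - 1 \le \alpha^{-2}\bigl(\frac{\lambda_2(B_1^n)}{\lambda_1(B_1^n)} - 1\bigr)$, which is consistent with (and, when $\alpha = 1$, recovers) Theorem \ref{theorem:ppw-Rn}.
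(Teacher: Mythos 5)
Your proposal is correct and is exactly the argument the paper intends: the corollary is stated as an immediate consequence of Theorem \ref{theorem:eigenvalue-estimate} ``due to the scaling of $\lambda_i$ in $\bR^n$,'' which is precisely the dilation identity $\lambda_i(B_{\alpha,\Omega}) = \frac{\lambda_1(B_{\alpha,\Omega})}{\lambda_1(B_1^n)}\lambda_i(B_1^n)$ you use. Your side remarks (that $K\le 0$ is forced and that $K=0$ is read as the limiting value $1$) are accurate bookkeeping and consistent with the paper.
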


\begin{remark}
The constant factor in Theorem \ref{theorem:eigenvalue-estimate} is the ratio of areas of geodesics spheres:
\[
\lambda_2(\Omega) - \lambda_1(\Omega) \leq \left( \frac{|\partial B_{\diam\Omega}|_{N(K)}}{|\partial B_{\diam \Omega}|_{N(k)}}\right)^2 (\lambda_2(B_{\alpha,\Omega}) - \lambda_1(B_{\alpha,\Omega})) .
\]
\end{remark}

\begin{remark}
We emphasize that in many cases $\alpha$ can be explicitly computed.  If $k = 0$, then Croke \cite{croke} proved an isoperimetric relation
\[
|\partial\Omega| \geq c_n |\Omega|^{\frac{n-1}{n}} ,
\]
where $c_n$ is given by an integral formula of trigonometric functions.  If $n = 4$ then in fact $c_n$ is the Euclidean constant, and so $\alpha = 1$.

More generally, the Hadamard conjecture implies that if $k \leq 0$, then $\alpha = 1$.  The conjecture is known in the following case: $n = 2$, proved by Weil \cite{weil} (for $k = 0$), and Aubin \cite{aubin} ($k < 0$); $n = 3$, proved by Kleiner \cite{kleiner}; $n = 4$, proved by Croke \cite{croke} when $k = 0$.  Further, when $n = 4$ and $k < 0$, Kloeckner-Kuperberg \cite{kloeckner-kuperberg} proved that domains in $M$ which are appropriately "small" (in a quantitative sense) satisfy the Hadamard conjecture.  The problem is open for general $n$.

If the metric $g_M$ is $C^0$-close to $g_N$, then $\alpha$ can be written in terms of this bound.
\end{remark}


Our approach follows \cite{ashbaugh-benguria}, though subtleties arise in the presence of non-constant curvature.  We ``symmetrize'' a function defined on $M$ to a function defined on $N(k)$.  We prove a version of Chiti's theorem for this notion of symmetrization, which requires a ``sharp'' form of Faber-Krahn for manifolds satisfying a weak isoperimetric inequality \eqref{eqn:weak-isop-ineq}.  The constant term in Theorem \ref{theorem:eigenvalue-estimate} essentially results from the fact that symmetrization does not anymore preserve symmetric functions (Proposition \ref{prop:sym-of-sym}).

We remark that our choice of test functions differ from \cite{ashbaugh-benguria} even when $M = N(k) = \bR^n$.  Unlike \cite{ashbaugh-benguria}, we ultimately truncate all our functions to $\hull\Omega$, which slightly changes the symmetrizations.

We are not sure whether the diameter or Ricci curvature assumptions are necessary to obtain a gap bound like Theorem \ref{theorem:eigenvalue-estimate}, though they are necessary in our proof.  We mention that Benguria-Linde \cite{benguria-linde-ratio} showed that for geodesic balls in hyperbolic space, the ratio $\lambda_2/\lambda_1$ is strictly decreasing in the radius.

I thank my advisor Simon Brendle for his advice and encouragement, and for suggesting this problem.  I also thank Benoit Kloeckner for pointing out an error in an earlier version, and the referees for helpful comments, and for suggesting Corollary \ref{cor:hadamard}.

\section{Preliminaries}

Given $p \in M$, and vectors $v, w \in T_p M$, write $v \cdot w$ for the Riemannian inner product, and $|v| = \sqrt{v \cdot v}$ for the length.  $\exp_p : T_pM \to M$ denotes the the usual Riemannian exponential map.  If $f : M \to \bR$ is differentiable at $p$, then $\nabla f$ is the gradient vector.  We write $\omega_n$ for the volume of the Euclidean unit ball in $\bR^n$.

For the duration of this paper $N^n$ will denote $N^n(k)$.  We fix a $q \in N = N(k)$, and write $r_q(x) = \dist_N(x, q)$.  Given a function $f : M \to \bR_+$, define $\mu_f(t) = |f > t|_M$.  As usual we write $\spt f$ for the support of $f$.

\begin{definition}\label{def:sym}
Take a bounded domain $D \subset M$, and a non-negative integrable $f : D \to \bR_+$.  Define the \emph{decreasing (resp. increasing) symmetrizations}
\[
S^{D, N} f : N \to \bR_+, \quad S_{D, N} f : N \to \bR_+,
\]
by the formulae
\begin{align*}
&S^{D, N} f (x) = \mu_f^{-1} (|B_{r_q(x)}(q)|_N) , \\
&S_{D, N} f(x) = \mu_f^{-1} (\max\{|D|_M - |B_{r_q(x)}(q)|_N, 0\} ) .
\end{align*}

Let $S^N D$ be the geodesic ball in $N$ centered at $q$ satisfying $|S^N D|_N = |D|_M$.
\end{definition}

In casual terms the $S^{D, N} f$ (resp. $S_{D, N} f$) is the decreasing (resp. increasing) function of $r_q(x)$ fixed by the condition
\[
|S^{D,N} f > t|_N = |S_{D,N} f > t |_N = |f > t|_M  \quad \forall t > 0.
\]
Both $\spt S^{D,N} f$ and $\spt S_{D, N} f$ are contained in the closure of $S^N D$.

\begin{remark}\label{rem:decr-sym}
The decreasing symmetrization is actually independent of $D$, so long as $D \supset \spt f$.  In other words, if $D' \supset D$, then $S^{D, N} f \equiv S^{D', N} f$.  However in the definition of increasing symmetrization there is an ambiguity without specifying the domain of definition: if $f(x) = 0$, do we count that towards the domain of $f$ or not?
\end{remark}

\begin{prop}\label{prop:sym-preserves-norm}
For any $p \geq 1$, we have
\[
||f||_{L^p(D)} = ||S^{D,N} f||_{L^p(N)} = ||S_{D,N} f||_{L^p(N)} .
\]
\end{prop}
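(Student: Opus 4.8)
The plan is to deduce the equality of the three $L^p$ norms from the layer-cake (Cavalieri) formula together with the fact — asserted informally right after Definition~\ref{def:sym} — that symmetrization preserves the distribution function $\mu_f$. Recall that for any non-negative measurable $h$ on a measure space $X$,
\[
\|h\|_{L^p(X)}^p = \int_0^\infty p\, t^{p-1}\, |h>t|_X\, dt, \qquad 1 \le p < \infty .
\]
It therefore suffices to prove the equidistribution identity
\[
|S^{D,N}f>t|_N = |S_{D,N}f>t|_N = |f>t|_M = \mu_f(t) \qquad \text{for every } t>0 ,
\]
since then the three layer-cake integrals are literally the same integral. The case $p=\infty$ follows as well, the essential supremum of $h$ being equal to $\inf\{t\ge 0 : |h>t|_X = 0\}$, which depends only on the distribution function.

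To prove the equidistribution identity I would use two elementary facts. First, $\Phi(r) := |B_r(q)|_N$ is continuous and strictly increasing in $r$, hence restricts to a homeomorphism from the interval of admissible radii about $q$ onto the interval $[0,V_{\max})$ of attainable ball-volumes, where $V_{\max}=|N|_N$; when $k>0$ this is exactly where one uses that the volumes occurring stay below $|N|_N$, which is part of the setup that makes $S^N D$ and the symmetrizations of Definition~\ref{def:sym} meaningful. Second, $\mu_f$ is non-increasing and right-continuous, and its generalized inverse $\mu_f^{-1}$ can be normalized so that $\mu_f^{-1}(0)=0$, so that $\mu_f^{-1}(s)=0$ for $s\ge\mu_f(0)$, and so that the duality
\[
\mu_f^{-1}(s) > t \iff s < \mu_f(t) \qquad (s>0,\ t\ge 0)
\]
holds; the first normalization is what forces $S_{D,N}f$ to vanish outside $\overline{S^N D}$.

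Now fix $t>0$ and put $s:=\mu_f(t)$. For the decreasing symmetrization, $S^{D,N}f(x)=\mu_f^{-1}(\Phi(r_q(x)))$, so the duality gives
\[
\{S^{D,N}f>t\} = \{x\in N : 0 < \Phi(r_q(x)) < s\} = B_\rho(q)\setminus\{q\}, \qquad \rho:=\Phi^{-1}(s) ,
\]
which has $N$-volume $\Phi(\rho)=s=\mu_f(t)$. For the increasing symmetrization, $S_{D,N}f(x)=\mu_f^{-1}(\max\{|D|_M-\Phi(r_q(x)),0\})$; since $0 < s\le|D|_M$ and $\mu_f^{-1}(0)=0$, the set $\{S_{D,N}f>t\}$ is precisely $\{x : |D|_M - s < \Phi(r_q(x)) < |D|_M\}$, an annulus inside $S^N D$ (a punctured ball if $s=|D|_M$) of $N$-volume $|S^N D|_N - (|D|_M - s) = s = \mu_f(t)$. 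Together with $|f>t|_M = \mu_f(t)$, which is the definition of $\mu_f$, this proves the identity, and the Proposition follows.

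The argument is just the classical Cavalieri principle, so I do not anticipate a real obstacle. The only point requiring genuine care is the bookkeeping around the generalized inverse $\mu_f^{-1}$: matching the strict versus non-strict inequalities in Definition~\ref{def:sym} against those in the duality, pinning down the endpoint conventions so that indeed $\spt S_{D,N}f\subset\overline{S^N D}$ and the superlevel sets come out as honest balls and annuli, and — in the case $k>0$ — confirming that $\Phi^{-1}$ is defined on every volume value that actually occurs.
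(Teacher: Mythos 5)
Your proof is correct and takes essentially the same route as the paper's: the layer-cake (Fubini) formula reduces the claim to the equidistribution identity $|S^{D,N}f>t|_N=|S_{D,N}f>t|_N=|f>t|_M$, which the paper simply asserts after Definition \ref{def:sym} and which you verify in careful detail.
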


\begin{proof}
By Fubini's theorem, 
\begin{align*}
\int_D f^p 
&= p \int_0^\infty t^{p-1} |f > t|_M dt \\
&= p \int_0^\infty t^{p-1} |S^{D, N} f > t|_N dt \\
&= \int_{N} (S^{D,N} f)^p . 
\end{align*}
The case of $S_{D,N} f$ is verbatim.
\end{proof}

Take a $p \in M$, and define
\[
m_{D, p}(\rho) = |B_\rho(p) \cap D|_M.
\]
Similarly, write $m_{N}(\rho) = |B_\rho(q)|_N$.	

\begin{prop}\label{prop:sym-of-sym}
Suppose $f : D \to \bR_+$ is a decreasing function of $r_p(x) = \dist_M(x, p)$, then
\[
S^{D,N} f(x) = f( (m_{D, p}^{-1} \circ m_{N})(r_q(x))) .
\]
If, on the other hand, $f$ is increasing in $r_p$, then
\[
S_{D,N} f(x) = f((m_{D, p}^{-1} \circ m_{N})(r_q(x))) .
\]
\end{prop}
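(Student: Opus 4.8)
The plan is to reduce the identity to an elementary computation of the distribution function $\mu_f$, exploiting the fact that the superlevel sets of a radial function are metric balls. I will carry out the decreasing case in detail; the increasing one is parallel.

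Write $f(x) = \phi(r_p(x))$ for a non-increasing $\phi \colon [0,\infty) \to \bR_+$ — this is exactly the hypothesis that $f$ is decreasing in $r_p$, and it is this $\phi$ that the statement tacitly identifies with $f$ in the expression $f((m_{D,p}^{-1}\circ m_N)(r_q(x)))$. For each $t>0$ the superlevel set is then a metric ball intersected with $D$:
\[
\{f > t\} = \{x \in D : r_p(x) < \phi^{-1}(t)\} = B_{\phi^{-1}(t)}(p) \cap D,
\]
where $\phi^{-1}$ denotes the appropriate generalized inverse of $\phi$. Taking $M$-volumes,
\[
\mu_f(t) = |B_{\phi^{-1}(t)}(p) \cap D|_M = m_{D,p}(\phi^{-1}(t)) = (m_{D,p}\circ \phi^{-1})(t).
\]
I would then invert this relation — since $m_{D,p}$ and $\phi^{-1}$ are monotone, on the relevant ranges $\mu_f^{-1} = \phi \circ m_{D,p}^{-1}$ — and substitute $s = |B_{r_q(x)}(q)|_N = m_N(r_q(x))$ into the definition of the decreasing symmetrization:
\[
S^{D,N}f(x) = \mu_f^{-1}(m_N(r_q(x))) = \phi\big(m_{D,p}^{-1}(m_N(r_q(x)))\big) = f\big((m_{D,p}^{-1}\circ m_N)(r_q(x))\big),
\]
which is the claim. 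For $x \notin S^N D$ one has $m_N(r_q(x)) \ge |D|_M$ and both sides vanish.

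For the increasing case, write $f(x) = \phi(r_p(x))$ with $\phi$ non-decreasing. Now $\{f > t\} = D \setminus \overline{B_{\phi^{-1}(t)}(p)}$, so $\mu_f(t) = |D|_M - m_{D,p}(\phi^{-1}(t))$; inverting gives $\mu_f^{-1}(s) = \phi\big(m_{D,p}^{-1}(|D|_M - s)\big)$, and feeding in $s = \max\{|D|_M - m_N(r_q(x)), 0\}$, as prescribed by the definition of $S_{D,N}f$, again produces $f\big((m_{D,p}^{-1}\circ m_N)(r_q(x))\big)$ — the factor $|D|_M - (\cdot)$ built into the increasing symmetrization is exactly what cancels the $|D|_M - (\cdot)$ appearing in $\mu_f$.

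The only delicate point — and I expect the sole one — is the bookkeeping with generalized inverses: none of $\mu_f$, $m_{D,p}$, $\phi$ need be strictly monotone or continuous (say, $f$ is constant on an annulus about $p$, or $D$ fails to meet some sphere $\partial B_\rho(p)$), so the identities $\mu_f = m_{D,p}\circ\phi^{-1}$ and $\mu_f^{-1} = \phi\circ m_{D,p}^{-1}$ must be checked through jumps and flat pieces. I would deal with this by fixing one convention for the generalized inverse throughout (e.g. $h^{-1}(s) = \sup\{\rho : h(\rho) \ge s\}$ for non-decreasing $h$, and symmetrically for non-increasing) and verifying the two identities directly; since these functions only ever enter the argument through integrals, it suffices to check them at points of continuity. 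Alternatively, one can bypass the inversions entirely: observe that $g(x) := f((m_{D,p}^{-1}\circ m_N)(r_q(x)))$ is a non-increasing function of $r_q$ and compute $|g > t|_N = \mu_f(t)$ for every $t$; since a non-increasing radial function on $N$ is uniquely determined by its distribution function, and that is precisely the defining property of $S^{D,N}f$, the two agree.
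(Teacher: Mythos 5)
Your argument is the same as the paper's: identify the superlevel set $\{f>t\}$ with $B_{\rho(t)}(p)\cap D$ (resp.\ $D\setminus\overline{B_{\rho(t)}(p)}$), deduce $\mu_f(t)=m_{D,p}(\rho(t))$, invert, and substitute into the definition of the symmetrization. The paper's proof is exactly this three-line computation; your extra care with generalized inverses and the alternative uniqueness-of-distribution-function argument are sound refinements of the same approach, not a different one.
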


\begin{proof}
If $f$ is decreasing in $r_p$, then $f^{-1}(t, \infty) = B_\rho(p)\cap D$ for some $\rho = \rho(t)$, and so $\mu_f(t) = m_{D, p}(\rho(t))$.  Similarly, $f$ is increasing then $f^{-1}(t, \infty) = D \sim \overline{B_\rho(p)}$.  Now use the definition of $S^{D,N} f$, $S_{D,N} f$.
\end{proof}

\begin{prop}\label{prop:sym-ineq}
If $f, g : D \to \bR_+$, then
\[
\int_{S^N D} (S^{D,N} f)(S_{D,N} g) \leq \int_D fg \leq \int_{S^N D} (S^{D,N} f)( S^{D,N} g) .
\]
\end{prop}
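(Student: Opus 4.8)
The plan is to combine the layer-cake (bathtub) representation with elementary inclusion–exclusion bounds applied level-by-level. Writing $f = \int_0^\infty \mathbf 1_{\{f>s\}}\,ds$, $g = \int_0^\infty \mathbf 1_{\{g>t\}}\,dt$ and applying Tonelli's theorem, each of the three integrals becomes a double integral of an intersection-measure over $(s,t)\in(0,\infty)^2$:
\[
\int_D fg = \int_0^\infty\!\!\int_0^\infty |\{f>s\}\cap\{g>t\}|_M \, ds\, dt,
\]
and analogously $\int_{S^N D}(S^{D,N}f)(S^{D,N}g)$ and $\int_{S^N D}(S^{D,N}f)(S_{D,N}g)$ are the double integrals of $|\{S^{D,N}f>s\}\cap\{S^{D,N}g>t\}|_N$ and $|\{S^{D,N}f>s\}\cap\{S_{D,N}g>t\}|_N$ respectively (integration over $S^ND$ equals integration over $N$ since all symmetrizations are supported in $\overline{S^ND}$). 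It then suffices to compare the integrands for each fixed $(s,t)$.

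Next I would identify the superlevel sets. Since $S^{D,N}f$ is a nonincreasing function of $r_q(x)$, the set $\{S^{D,N}f>s\}$ is, up to a null set, the geodesic ball $B$ centered at $q$ with $|B|_N = \mu_f(s)$; likewise $\{S^{D,N}g>t\}$ is the concentric ball of $N$-measure $\mu_g(t)$, and, because $S_{D,N}g$ is a nondecreasing function of $r_q(x)$, the set $\{S_{D,N}g>t\}$ is the outer region $S^ND\setminus\overline{B'}$, where $B'$ is the ball centered at $q$ with $|B'|_N = \max\{|D|_M-\mu_g(t),0\}$; here one uses $|S^ND|_N = |D|_M$ together with the level-set identity behind Proposition \ref{prop:sym-preserves-norm}. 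The only care needed is that $\mu_f^{-1}$ is a generalized inverse, so jumps and flat portions of $\mu_f$ must be checked to contribute only null sets to the level sets — the same bookkeeping as in Proposition \ref{prop:sym-preserves-norm}.

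For the right-hand inequality, the trivial bound $|A\cap B|_M \le \min\{|A|_M,|B|_M\}$ gives $|\{f>s\}\cap\{g>t\}|_M \le \min\{\mu_f(s),\mu_g(t)\}$, whereas $\{S^{D,N}f>s\}$ and $\{S^{D,N}g>t\}$ are concentric balls about $q$, so their intersection is the smaller of the two and has $N$-measure exactly $\min\{\mu_f(s),\mu_g(t)\}$. Hence the $N$-integrand pointwise dominates the $M$-integrand, and integrating yields $\int_D fg \le \int_{S^ND}(S^{D,N}f)(S^{D,N}g)$. For the left-hand inequality I would instead invoke inclusion–exclusion: for measurable $A,B\subset D$,
\[
|A\cap B|_M = |A|_M + |B|_M - |A\cup B|_M \ge \max\{|A|_M+|B|_M-|D|_M,\,0\},
\]
so $|\{f>s\}\cap\{g>t\}|_M \ge \max\{\mu_f(s)+\mu_g(t)-|D|_M,0\}$; meanwhile $\{S^{D,N}f>s\} = B$ and $\{S_{D,N}g>t\} = S^ND\setminus\overline{B'}$ are concentric, so $|B\cap(S^ND\setminus\overline{B'})|_N = \max\{|B|_N-|B'|_N,0\} = \max\{\mu_f(s)+\mu_g(t)-|D|_M,0\}$ (using $|B|_N \le |D|_M = |S^ND|_N$). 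Thus the $N$-integrand is pointwise dominated by the $M$-integrand, and integrating gives $\int_{S^ND}(S^{D,N}f)(S_{D,N}g)\le\int_D fg$. I expect no genuine obstacle here; the only delicate point is the measure-theoretic handling of the generalized inverse $\mu_f^{-1}$ and of the boundary spheres of the symmetrized level sets, exactly as in Proposition \ref{prop:sym-preserves-norm}. One also tacitly uses that $\rho\mapsto|B_\rho(q)|_N$ is continuous and strictly increasing on $[0,\mathrm{rad}(S^ND)]$, so that "the ball centered at $q$ of prescribed $N$-measure" is well defined — immediate for $N=N(k)$ in the regime under consideration.
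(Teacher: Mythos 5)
Your proposal is correct and follows essentially the same route as the paper's proof: the layer-cake/Fubini decomposition into intersection measures of level sets, the bounds $\max\{|A|+|B|-|D|,0\}\le|A\cap B|\le\min\{|A|,|B|\}$, and the observation that these bounds are attained exactly by the concentric symmetrized level sets. The extra care you take with the generalized inverse and null sets is fine but not a substantive difference.
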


\begin{proof}
By Fubini's theorem, we obtain
\begin{align*}
\int_D fg
&= \int_0^\infty \int_0^\infty |\{ f > s\} \cap \{ g > t\}|_M ds dt \\
&\leq \int_0^\infty \int_0^\infty \min\{ |f > s|_M, |g > t|_M \} ds dt \\
&= \int_0^\infty \int_0^\infty |\{S^{D,N} f > s\} \cap \{ S^{D,N} g > t\}|_N ds dt \\
&= \int_{S^N D} S^{D,N} f S^{D,N} g .
\end{align*}
The penultimate equality arises because both $S^{D,N} f$, $S^{D,N} g$ are decreasing functions of $r_q$ (i.e. the upper level-sets are balls concentric about $q$). By the same logic, since $S_{D,N} g$ is an increasing function of $r_q$, 
\begin{align*}
&\int_0^\infty \int_0^\infty |\{ f > s \} \cap \{ g > t \}|_M ds dt \\
& \geq \int_0^\infty \int_0^\infty \max\{ |f > s|_M + |g > t|_M - |D|_M, 0 \} ds dt \\
& = \int_0^\infty \int_0^\infty | \{ S^{D,N} f > s \} \cap \{ S_{D,N} g > t \}|_N ds dt . \qedhere
\end{align*}
\end{proof}

\begin{prop}\label{prop:sym-respects-powers}
For any $\beta > 0$, 
\[
S^{D,N} (f^\beta) = (S^{D,N} f)^\beta
\]
and similarly for $S_{D,N} f$.
\end{prop}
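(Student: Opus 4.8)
The plan is to reduce everything to the elementary behavior of distribution functions under the power map, so that no geometry of $M$ or $N$ is needed beyond Definition \ref{def:sym}. First I would record that for $g = f^\beta$ and any $t > 0$,
\[
\mu_g(t) = |f^\beta > t|_M = |f > t^{1/\beta}|_M = \mu_f(t^{1/\beta}),
\]
since $x \mapsto x^\beta$ is a strictly increasing bijection of $\bR_+$. Equivalently, $\mu_{f^\beta} = \mu_f \circ \phi$, where $\phi(t) = t^{1/\beta}$ is an order-preserving homeomorphism of $[0,\infty)$ onto itself.

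Next I would pass this identity to the generalized inverses appearing in Definition \ref{def:sym}. With whatever convention one fixes for $\mu_f^{-1}$ (say $\mu_f^{-1}(s) = \inf\{t \ge 0 : \mu_f(t) \le s\}$), composing the non-increasing function $\mu_f$ on the \emph{inside} with the strictly increasing homeomorphism $\phi$ does not affect the sup/inf defining the generalized inverse, so $(\mu_f \circ \phi)^{-1} = \phi^{-1} \circ \mu_f^{-1}$. That is, $\mu_{f^\beta}^{-1}(s) = \big(\mu_f^{-1}(s)\big)^\beta$ for every $s$ in the relevant range.

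Finally I would substitute into the two formulae of Definition \ref{def:sym}. For the decreasing symmetrization,
\[
S^{D,N}(f^\beta)(x) = \mu_{f^\beta}^{-1}\big(|B_{r_q(x)}(q)|_N\big) = \big(\mu_f^{-1}(|B_{r_q(x)}(q)|_N)\big)^\beta = \big(S^{D,N} f(x)\big)^\beta,
\]
and the identical computation with the argument $|B_{r_q(x)}(q)|_N$ replaced by $\max\{|D|_M - |B_{r_q(x)}(q)|_N, 0\}$ yields $S_{D,N}(f^\beta) = (S_{D,N} f)^\beta$. The only point requiring any care is the bookkeeping around the generalized inverse $\mu_f^{-1}$, since $\mu_f$ is merely non-increasing and may have jumps or flat stretches; but this is exactly the same convention already implicit in Definition \ref{def:sym} and Propositions \ref{prop:sym-preserves-norm}--\ref{prop:sym-of-sym}, so no new difficulty arises.
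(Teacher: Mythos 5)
Your argument is correct and is essentially the paper's own proof: the paper records $\mu_{f^\beta}(t^\beta) = \mu_f(t)$ and concludes $\mu_{f^\beta}^{-1} = (\mu_f^{-1})^\beta$, which is exactly your identity $(\mu_f\circ\phi)^{-1} = \phi^{-1}\circ\mu_f^{-1}$; you merely spell out the substitution into Definition \ref{def:sym} and the care needed for the generalized inverse. No further comment is needed.
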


\begin{proof}
We have $\mu_{f^\beta}(t^\beta) = \mu_f(t)$, and hence $\mu_{f^\beta}^{-1} = (\mu_f^{-1})^\beta$.
\end{proof}

\section{Faber-Krahn and Chiti}

We need the following weak version of Faber-Krahn.  The inequality \eqref{eqn:faber-krahn} is a standard argument, but we find that despite any sharpness of the isoperimetric profile, we can still obtain a characterization of equality.  Recall the definition \eqref{eqn:weak-isop-ineq} of $\alpha$.
\begin{theorem}[weak Faber-Krahn]\label{theorem:faber-krahn}
If $\Omega$ is a bounded domain in $M$, then
\begin{equation}\label{eqn:faber-krahn}
\lambda_1(\Omega) \geq \alpha^2 \lambda_1(S^N\Omega) ,
\end{equation}
with equality if and only if
\[
S^{\Omega,N} u_1 \equiv v_1
\]
where $u_1$ is the first Dirichlet eigenfunction of $\Omega$, and $v_1$ the first Dirichlet eigenfunction on $S^N \Omega$, both normalized so that
\[
||u_1||_{L^2(\Omega)} = ||v_1||_{L^2(S^N\Omega)} .
\]
\end{theorem}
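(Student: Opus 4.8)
The plan is to follow the classical symmetrization argument for Faber–Krahn, with the isoperimetric deficit tracked via $\alpha$, and then extract the equality case by being careful about where the chain of inequalities can become an equation. Let $u_1$ be the first Dirichlet eigenfunction on $\Omega$, normalized to have $L^2$-norm one (by Proposition \ref{prop:sym-preserves-norm} the symmetrization $S^{\Omega,N}u_1$ then also has $L^2$-norm one on $S^N\Omega$). The variational characterization gives $\lambda_1(\Omega) = \int_\Omega |\nabla u_1|^2$. I would first establish the pointwise co-area identity: for a.e.\ level $t$, writing $\mu(t) = |u_1 > t|_M$, one has $-\mu'(t) = \int_{\{u_1 = t\}} |\nabla u_1|^{-1} d\mathcal{H}^{n-1}$, and by Cauchy–Schwarz $|\{u_1 = t\}|_M^2 \le \left(\int_{\{u_1=t\}} |\nabla u_1| \right)\left(-\mu'(t)\right)$. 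Feeding in the weak isoperimetric inequality \eqref{eqn:weak-isop-ineq}, $|\{u_1 = t\}|_M \ge \alpha A_{n,k}(\mu(t))$, and comparing with the corresponding identity on $S^N\Omega$ (where $v_1 = S^{\Omega,N}u_1$ has the same distribution function $\mu$, and the isoperimetric inequality is an equality on the balls $\{v_1 > t\}$), I obtain
\begin{equation*}
\int_{\{u_1 = t\}} |\nabla u_1|\, d\mathcal{H}^{n-1} \ \ge\ \alpha^2 \int_{\{v_1 = t\}} |\nabla v_1|\, d\mathcal{H}^{n-1}
\end{equation*}
for a.e.\ $t$. Integrating in $t$ (co-area again) yields $\int_\Omega |\nabla u_1|^2 \ge \alpha^2 \int_{S^N\Omega} |\nabla v_1|^2 \ge \alpha^2 \lambda_1(S^N\Omega)$, where the last step uses that $v_1$ is an admissible test function on $S^N\Omega$ with unit norm. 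This proves \eqref{eqn:faber-krahn}.

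For the equality case, suppose $\lambda_1(\Omega) = \alpha^2\lambda_1(S^N\Omega)$. Then every inequality above is an equality for a.e.\ $t$. Equality in the last step forces $v_1 = S^{\Omega,N}u_1$ to be, up to sign, the first Dirichlet eigenfunction on $S^N\Omega$ — but $S^{\Omega,N}u_1$ is nonnegative and the first eigenfunction is the unique (up to scaling) nonnegative one, so with the stated normalization $S^{\Omega,N}u_1 \equiv v_1$ where $v_1$ now denotes that eigenfunction. This is exactly the claimed characterization, so the forward implication is the substantive content; the converse is immediate since if $S^{\Omega,N}u_1 = v_1$ then $\int_\Omega|\nabla u_1|^2$ must meet the lower bound (one reruns the chain and observes it cannot be strict).

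The main obstacle is the equality analysis: \emph{a priori} equality in the co-area/Cauchy–Schwarz step only tells us that $|\nabla u_1|$ is (a.e.) constant on each level set $\{u_1 = t\}$, and equality in \eqref{eqn:weak-isop-ineq} along these level sets would, if $\alpha < 1$, be impossible — so one must argue that the relevant equality is not in the isoperimetric inequality itself but in the combined estimate, i.e.\ the factor $\alpha^2$ on both sides is what is being matched, and the genuine constraint extracted is $u_1 = v_1 \circ \Phi$ for the appropriate rearrangement map $\Phi$, which combined with $|\nabla u_1|$ constant on level sets and the matching of Rayleigh quotients pins down $S^{\Omega,N}u_1 = v_1$ without needing $\Omega$ itself to be a ball. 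I would phrase the argument so that only the distributional identity is used, sidestepping any claim that equality in the isoperimetric inequality holds. A secondary technical point is justifying the co-area formula and the differentiability of $\mu$ for the eigenfunction $u_1$; this is standard (Sard, analyticity of $u_1$ in the interior, Federer's co-area formula) and I would cite it rather than reprove it.
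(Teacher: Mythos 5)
Your argument is essentially the paper's: symmetrize $u_1$ to a radial function $u^\ast = S^{\Omega,N}u_1$ on $S^N\Omega$, combine the co-area formula, Cauchy--Schwarz and the weak isoperimetric inequality \eqref{eqn:weak-isop-ineq} to compare Dirichlet energies up to the factor $\alpha^2$, and then use $u^\ast$ as a unit-norm test function for $\lambda_1(S^N\Omega)$. The only organizational difference is that the paper substitutes the eigenvalue equation, $\int_{\{u_1=t\}}|\nabla u_1| = \lambda_1\int_0^{\mu(t)}\mu^{-1}(\sigma)\,d\sigma$, to turn the level-set estimate into a differential inequality for $\mu^{-1}$ and then bounds $\int|\nabla u^\ast|^2$ by $\lambda_1/\alpha^2$ directly; this is equivalent to your level-set-by-level-set P\'olya--Szeg\H{o} comparison, but the differential inequality is reused verbatim in the proof of the Chiti-type theorem, which is why the paper phrases it that way.

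Two remarks on the equality discussion. First, your mechanism for the forward direction is the right one and is exactly the paper's: if equality holds overall then $\int_B|\nabla u^\ast|^2 = \lambda_1(S^N\Omega)$, and simplicity of the first eigenvalue forces $u^\ast\equiv v_1$; none of the worries in your last paragraph about equality in the isoperimetric inequality or about the rearrangement map $\Phi$ are needed, since the rigidity is extracted entirely from the variational step. Second, your claim that the converse (``$S^{\Omega,N}u_1\equiv v_1$ implies equality'') is \emph{immediate} is not justified: if $u^\ast\equiv v_1$ then the last inequality in the chain is an equality, but the P\'olya--Szeg\H{o} step $\int_\Omega|\nabla u_1|^2\geq \alpha^2\int_B|\nabla u^\ast|^2$ could a priori still be strict, so rerunning the chain does not settle it. Note, however, that the paper's own proof also only establishes the forward implication (it shows $S^{\Omega,N}u_1\not\equiv v_1$ implies strict inequality), and only that direction is used later, so this is a shared omission rather than a defect specific to your argument.
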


\begin{proof}
Write $S^N\Omega = B = B_R(q)$, and without loss of generality suppose $||u_1||_{L^2(\Omega)} = ||v_1||_{L^2(B)} = 1$, so of course $||S^{\Omega,N} u_1||_{L^2(B)} = 1$ also.  Let $\mu(t) = |u_1 > t|_M$.  For ease of notation write $A = A_{n,k}$ for the isoperimetric profile \eqref{eqn:isop-ineq} of the model space $N^n(k)$, and $\lambda_1 = \lambda_1(\Omega)$.

We have, for a.e. $t$,
\begin{align*}
-\mu'(t) 
&\geq |\partial\{u_1 > t\}|^2_M \left( \int_{\{u_1 = t\}} |\nabla u_1| \right)^{-1}\\
& \geq \alpha^2 A(|u_1 > t|_M)^2 \left( \int_{\{u_1 = t\}} |\nabla u_1| \right)^{-1} \\
&= \alpha^2 A(\mu(t))^2 \left( \int_{u_1 > t} -\Delta u_1 \right)^{-1} \\
&= \alpha^2 A(\mu(t))^2 \left( \lambda_1 \int_0^{\mu(t)} \mu^{-1}(\sigma) d\sigma \right)^{-1},
\end{align*}
and hence
\[
(\mu^{-1})'(s) \geq -\frac{\lambda_1}{\alpha^2} A^{-2}(s) \int_0^s \mu^{-1}(\sigma) d\sigma .
\]

Since $|B|_N = |\Omega|_M$, and $u_1 = 0$ on $\partial \Omega$, then $S^{\Omega,N} u_1$ has Dirichlet boundary conditions.  If $S^{\Omega,N} u_1 \not\equiv v_1$, then
\[
\lambda_1(S^N\Omega) < \int_B |\nabla S^{\Omega,N} u_1|^2 .
\]

Write $m(r) = |B_r(q)|_N$, and observe that $A(s) = m'(m^{-1}(s))$.  Since $S^{\Omega,N} u_1(r) = \mu^{-1}(m(r))$, we have
\[
|\nabla S^{\Omega,N} u_1|^2 = \left[ (\mu^{-1})'(m(r)) m'(r)\right]^2 .
\]
Therefore, we calculate
\begin{align*}
\lambda_1(S^N\Omega)
&< \int_B ((\mu^{-1})'(m (r)) m'(r))^2 \\
&= \int_0^R ((\mu^{-1})'(m (r)) m'(r))^2 m'(r) dr \\
&\leq \frac{\lambda_1}{\alpha^2} \int_0^R \frac{m'(r)^2}{A(m(r))^2} |(\mu^{-1})'|(m(r)) \int_0^{m(r)} \mu^{-1}(\sigma) d\sigma m'(r) dr \\
&= \frac{\lambda_1}{\alpha^2} \int_0^R \frac{A(m (r))^2}{A(m(r))^2} |(\mu^{-1})'|(m(r)) \int_0^{m(r)} \mu^{-1}(\sigma) d\sigma m'(r) dr \\
&\leq \frac{\lambda_1}{\alpha^2} \int_0^{|B|} ((-\mu^{-1})'(s)) \int_0^s \mu^{-1}(\sigma) d\sigma ds \\
&= \frac{\lambda_1}{\alpha^2} \int_0^{R} \mu^{-1}(m(r))^2 m'(r) dr \\
&= \frac{\lambda_1}{\alpha^2} \int_B (S^N u_1)^2 \\
&= \frac{\lambda_1}{\alpha^2} . \qedhere
\end{align*}
\end{proof}

Suppose $B_{\alpha,\Omega}$ is a ball in $N$, centered at $q$, with first eigenvalue $\lambda_1(B_{\alpha,\Omega}) = \lambda_1(\Omega) / \alpha^2$, and first eigenfunction $z$.  By the maximum principle and simplicity of $\lambda_1$, $z$ is a decreasing function of $r_q$.  By Faber-Krahn above, $\lambda_1(B_{\alpha,\Omega}) \geq \lambda_1(S^N\Omega)$, and hence $B \subset S^N\Omega$.  Further, if $B = S^N\Omega$ then necessarily $z \equiv S^N u_1$.

We obtain the following weak version of Chiti's theorem \cite{chiti}.
\begin{theorem}[weak Chiti]\label{theorem:chiti}
Let $\Omega \subset M$ be a bounded domain with first eigenvalue $\lambda_1(\Omega)$, and first eigenfunction $u_1$.  Let $B_{\alpha,\Omega} = B_R(q)$ be a ball in $N$ with first eigenvalue $\lambda_1(B_{\alpha,\Omega}) = \lambda_1(\Omega) / \alpha^2$, and first eigenfunction $z$.  Let $u_1$ and $z$ be normalized so that
\[
||u_1||_{L^2(\Omega)} = ||z||_{L^2(B_{\alpha,\Omega})}.
\]

Then there is an $r_0 \in (0, R)$ so that
\begin{align*}
&z \geq S^{\Omega,N} u_1 \text{ on $[0, r_0]$} \\
&z \leq S^{\Omega,N} u_1 \text{ on $[r_0, R]$} .
\end{align*}
\end{theorem}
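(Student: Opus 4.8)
The plan is to follow Chiti's original comparison argument, adapted to the symmetrization $S^{\Omega,N}$. Set $\phi = S^{\Omega,N} u_1$, which has the same $L^2$ norm as $z$ by Proposition \ref{prop:sym-preserves-norm} and our normalization, and is a decreasing function of $r_q$. Both $\phi$ and $z$ vanish at $r = R$ (for $z$ this is its Dirichlet condition on $B_{\alpha,\Omega}$; for $\phi$ this is because $\spt\phi \subset \overline{S^N\Omega}$ and $B_{\alpha,\Omega} = B_R(q) \subset S^N\Omega$ by weak Faber-Krahn, Theorem \ref{theorem:faber-krahn}). The first step is to derive, exactly as in the proof of Theorem \ref{theorem:faber-krahn}, the pointwise differential inequality for $\phi$: writing $\mu(t) = |u_1 > t|_M$, $m(r) = |B_r(q)|_N$, $A = A_{n,k}$, one has $\phi(r) = \mu^{-1}(m(r))$ and
\[
-\phi'(r) \,\leq\, \frac{\lambda_1(\Omega)}{\alpha^2}\, \frac{m'(r)}{A(m(r))^2} \int_0^{m(r)} \mu^{-1}(\sigma)\, d\sigma \,=\, \frac{\lambda_1(\Omega)}{\alpha^2}\, \frac{1}{m'(r)} \int_{B_r(q)} \phi .
\]
On the other hand, $z$ satisfies the radial ODE $-(m'(r) z'(r))' = \lambda_1(B_{\alpha,\Omega}) m'(r) z(r)$, which integrates to the identity $-z'(r) = \frac{\lambda_1(\Omega)}{\alpha^2}\frac{1}{m'(r)}\int_{B_r(q)} z$. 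Thus $z$ satisfies with equality the same type of relation that $\phi$ satisfies as an inequality.

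The second step is the comparison. Suppose the conclusion fails. The two functions cannot be identically equal (if $\phi\equiv z$ then equality holds in weak Faber-Krahn, forcing $B_{\alpha,\Omega} = S^N\Omega$, but then one checks the inequality above is an equality and the conclusion holds trivially with any $r_0$); and since $\|\phi\|_{L^2} = \|z\|_{L^2}$ with both decreasing and vanishing at $R$, they must cross. So the failure of the conclusion means $z - \phi$ changes sign at least twice, or changes sign once from $-$ to $+$. Consider the first crossing point $r_0$ where $z - \phi$ goes from $+$ to $-$ (if $z(0) \le \phi(0)$, a separate short argument using $\int(z^2-\phi^2)=0$ together with the sign structure and the Cauchy-Schwarz-type monotonicity handles it, ultimately contradicting $\|z\|_{L^2}=\|\phi\|_{L^2}$). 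On $[0,r_0]$ we have $z\ge\phi\ge 0$, hence $\int_{B_r(q)} z \ge \int_{B_r(q)}\phi$ for $r\le r_0$, so combining the two displayed relations, $(z-\phi)' = z' - \phi' \le -\frac{\lambda_1}{\alpha^2}\frac{1}{m'(r)}\int_{B_r(q)}(z-\phi) \le 0$ on a neighborhood of $r_0$ — more carefully, one shows $z-\phi$ cannot have an interior zero followed by another sign change by a Gronwall / maximum-principle argument applied to the integro-differential inequality. The cleanest route is the classical one: define $w = z - \phi$ and use that wherever $w = 0$ with $w$ decreasing through zero, the inequality on $w'$ forces $w$ to stay non-positive afterwards; this rules out a second crossing and pins down the single sign change $+ \to -$ claimed in the theorem.

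The main obstacle, and the point requiring the most care, is handling the inequality (rather than equality) in the relation for $\phi$ together with the possibility that $\phi$ and $z$ first touch or that $w$ vanishes on an interval — i.e. making the "at most one sign change" argument rigorous when one of the two competitors only satisfies a differential \emph{inequality}. The standard device is to integrate: set $W(r) = \int_0^r m'(s) (z-\phi)(s)\, ds$ or work with $\int_{B_r} (z-\phi)$ directly, show it satisfies a linear second-order differential inequality of the form $-(m' F')' \le \frac{\lambda_1}{\alpha^2} m' F$ on the region where $z \ge \phi$, and invoke a Sturm-type comparison against the solution $z$ of the corresponding equality to locate the nodal structure. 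I would also need the elementary facts that $m'(r) = A(m(r)) > 0$ on $(0,R)$ and that $\int_0^{m(r)}\mu^{-1} = \int_{B_r(q)}\phi$ (Fubini), both of which are immediate from the definitions. Once the single crossing is established, $r_0 \in (0,R)$ is automatic: $r_0 \ne R$ since both vanish there but are not identically equal near $R$, and $r_0 \ne 0$ since $z(0) > \phi(0)$ would otherwise have to hold strictly, which is exactly the generic case.
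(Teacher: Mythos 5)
Your first step --- the integro-differential inequality $-\phi'(r)\le \tfrac{\lambda_1}{\alpha^2 m'(r)}\int_{B_r(q)}\phi$ for $\phi=S^{\Omega,N}u_1$, together with the corresponding equality for $z$ --- is exactly the paper's starting point (it is carried over from the proof of weak Faber--Krahn, Theorem \ref{theorem:faber-krahn}). The gap is in the second step. Your ``cleanest route'' asserts that once $w=z-\phi$ decreases through zero, the inequality $w'\le -\tfrac{\lambda_1}{\alpha^2 m'(r)}\int_{B_r(q)}w$ forces $w$ to stay non-positive. That is not true: the inequality only controls the sign of $w'$ while $\int_{B_r(q)}w\ge 0$. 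After the first downcrossing $w$ is negative, so $\int_{B_r(q)}w$ decreases and may itself become negative; from then on the inequality reads $w'\le(\text{positive quantity})$ and places no obstruction to $w$ recrossing zero. A plain Gronwall or maximum-principle argument on $w$ therefore does not rule out a second sign change. The Sturm-type comparison on the integrated quantity that you offer as a fallback is not carried out (and is phrased in terms of ``the region where $z\ge\phi$,'' which is what you are trying to determine), and the case $z(0)\le\phi(0)$ is deferred to ``a separate short argument'' that is never given.

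The paper closes exactly this gap with the classical Chiti--Talenti scaling device, which is the one idea your proposal is missing. Working in the measure variable $s$ (so $\phi$ becomes $\mu^{-1}$ and $z$ becomes $\nu^{-1}$), set $s_0=\sup\{s\in(0,|B|_N):\mu^{-1}(s)\le\nu^{-1}(s)\}$; this is positive by the $L^2$ normalization, and the degenerate case $s_0=|B|_N$ is handled by the equality case of weak Faber--Krahn. By construction $\mu^{-1}\ge\nu^{-1}$ on $[s_0,|B|_N]$, and it remains to show $\mu^{-1}\le\nu^{-1}$ on $[0,s_0]$. If not, let $\beta=\sup_{[0,s_0]}\mu^{-1}/\nu^{-1}>1$. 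Since the integro-differential relation is linear, $\beta\nu^{-1}$ satisfies it with equality; on $[0,s_0]$ the difference $\beta\nu^{-1}-\mu^{-1}$ is nonnegative, hence has nonnegative integral, hence is nonincreasing, hence is bounded below by its value $(\beta-1)\nu^{-1}(s_0)>0$ at $s_0$ --- contradicting that the supremum defining $\beta$ forces $\inf_{[0,s_0]}(\beta\nu^{-1}-\mu^{-1})=0$. You should replace your crossing analysis with this argument (or a complete substitute); as written, the central step of your proof does not go through.
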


\begin{proof}
Let $\mu(t) = |u_1 > t|_M$ and $\nu(t) = |z > t|_N$.  Write $\lambda_1 = \lambda_1(\Omega)$.  Recall we had
\[
(\mu^{-1})'(s) \geq -\frac{\lambda_1}{\alpha^2} A^{-2}(s) \int_0^s \mu^{-1}(\sigma) d\sigma .
\]
By repeating the proof of this with $\nu$ instead of $\mu$, we obtain
\begin{align*}
(\nu^{-1})'(s) 
&= -\lambda_1(B_{\alpha,\Omega}) A^{-2}(s) \int_0^s \nu^{-1}(\sigma) d\sigma \\
&= -\frac{\lambda_1}{\alpha^2} A^{-2}(s) \int_0^s \nu^{-1}(\sigma) d\sigma .
\end{align*}

The normalization implies $s_0 = \sup \{ s \in (0, |B|_N) : \mu^{-1}(s) \leq \nu^{-1}(s) \}$ is defined and positive.  If $s_0 = |B|_N$, then since $\nu^{-1}(|B|_N) = 0$ and $\mu^{-1}$ is decreasing, we necessarily have that $|B|_N = |\Omega|_M$.  Otherwise $u_1$ would be zero on an open set, contradicting unique continuation.  If $|B|_N = |\Omega|_M$ then by Theorem \ref{theorem:faber-krahn} $S^{\Omega,N} u_1 \equiv z$ and the Theorem is vacuous.

So we can assume $s_0 \in (0, |B|_N)$.  Clearly $\mu^{-1} \geq \nu^{-1}$ on $[s_0, |B|_N]$, and $\mu^{-1}(s_0) = \nu^{-1}(s_0)$.  We show $\mu^{-1} \leq \nu^{-1}$ on $[0, s_0]$.

Suppose, towards a contradiction, that $\beta = \sup_{[0, s_0]} \frac{\mu^{-1}}{\nu^{-1}} > 1$.  Then we calculate, for $s \in [0, s_0]$, 
\[
(\beta \nu^{-1} - \mu^{-1})'(s) \leq -\frac{\lambda_1}{\alpha^2} A^{-2}(s) \int_0^s (\beta \nu^{-1} - \mu^{-1})(\sigma) d\sigma \leq 0 .
\]
And therefore
\[
(\beta \nu^{-1} - \mu^{-1})(s) \geq (\beta \nu^{-1} - \mu^{-1})(s_0) = (\beta - 1)\nu^{-1}(s_0) > 0
\]
for any $s \in [0, s_0]$, contradicting our choice of $\beta$.  The Theorem follows by choosing $r_0$ which satisfies $|B_{r_0}(q)|_N = s_0$.
\end{proof}

\begin{corollary}\label{corollary:chiti}
If $F : S^N \Omega \to \bR_+$ is a decreasing function of $r_q$, then
\[
\int_{S^N \Omega } (S^{\Omega,N} u_1)^2 F \leq \int_{B_{\alpha,\Omega}} z^2 F
\]
with $B_{\alpha,\Omega}$, $z$ as in Theorem \ref{theorem:chiti}.  If $F$ is an increasing function of $r_q$, then
\[
\int_{S^N \Omega} (S^{\Omega,N} u_1)^2 F \geq \int_{B_{\alpha,\Omega}} z^2 F .
\]
\end{corollary}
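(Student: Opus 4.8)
The plan is to deduce the Corollary from Theorem~\ref{theorem:chiti} by a one-variable ``single crossing'' argument: the two functions $S^{\Omega,N}u_1$ and $z$ have the same $L^2$ mass and, by Chiti, their difference changes sign exactly once, so pairing the difference of their squares against a monotone weight has a definite sign.

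First I would fix notation. By Proposition~\ref{prop:sym-preserves-norm} together with the normalization in Theorem~\ref{theorem:chiti},
\[
\|S^{\Omega,N}u_1\|_{L^2(S^N\Omega)} = \|u_1\|_{L^2(\Omega)} = \|z\|_{L^2(B_{\alpha,\Omega})},
\]
and, as recorded in the paragraph preceding Theorem~\ref{theorem:chiti}, $B_{\alpha,\Omega}\subset S^N\Omega$. I would extend $z$ by zero to all of $S^N\Omega$ (legitimate precisely because $B_{\alpha,\Omega}\subset S^N\Omega$), so that $\int_{S^N\Omega} z^2 F = \int_{B_{\alpha,\Omega}} z^2 F$ for every $F$, and introduce
\[
g := (S^{\Omega,N}u_1)^2 - z^2 \quad\text{on } S^N\Omega,
\]
which is a function of $r_q$ alone and satisfies $\int_{S^N\Omega} g = 0$ by the equality of $L^2$ norms above.

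Next I would use Theorem~\ref{theorem:chiti} to locate the sign change of $g$. With $r_0$ as in that theorem: on $\{r_q\le r_0\}$ we have $0\le S^{\Omega,N}u_1\le z$, hence $g\le 0$; on $\{r_q\ge r_0\}$ we have $0\le z\le S^{\Omega,N}u_1$ (using that $z\equiv 0$ outside $B_R(q)=B_{\alpha,\Omega}$), hence $g\ge 0$. So $g\le 0$ on the geodesic ball $\{r_q\le r_0\}$ and $g\ge 0$ on its complement in $S^N\Omega$.

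Finally, given $F$ decreasing in $r_q$, I would set $c=F(r_0)$ and write, using $\int_{S^N\Omega} g = 0$,
\[
\int_{S^N\Omega}(S^{\Omega,N}u_1)^2 F - \int_{B_{\alpha,\Omega}} z^2 F \;=\; \int_{S^N\Omega} g\,F \;=\; \int_{S^N\Omega} g\,(F-c).
\]
Because $F$ is decreasing, $F-c\ge 0$ on $\{r_q\le r_0\}$ and $F-c\le 0$ on $\{r_q\ge r_0\}$, so $g\,(F-c)\le 0$ a.e., and the right-hand side is $\le 0$; this is the first inequality. The increasing case is verbatim with all inequalities reversed. I do not expect any genuine obstacle here: once Theorem~\ref{theorem:chiti} is available the argument is elementary, and the only points needing a little care are the zero-extension of $z$ and the observation that, all functions in sight being radial about $q$, the assertion ``$g$ crosses zero once'' is literally a statement on the interval of radii.
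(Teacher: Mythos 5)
Your proof is correct and is essentially the paper's own argument: both reduce to the pointwise inequality $\bigl((S^{\Omega,N}u_1)^2 - z^2\bigr)(F - F(r_0)) \leq 0$ coming from the single sign change in Theorem \ref{theorem:chiti}, and then integrate using the equality of $L^2$ norms from Proposition \ref{prop:sym-preserves-norm}. Your explicit remarks about extending $z$ by zero (valid since $B_{\alpha,\Omega} \subset S^N\Omega$) and about the inequality $z \leq S^{\Omega,N}u_1$ persisting outside $B_R(q)$ are points the paper leaves implicit, but the route is the same.
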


\begin{proof}
Let $r_0$ be as in Theorem \ref{theorem:chiti}.  For $F$ decreasing, we have that
\[
(z^2 - (S^{\Omega, N} u_1)^2)(F - F(r_0)) \geq 0,
\]
with support in $S^N \Omega$.  Therefore we have
\begin{align*}
\int_{S^N \Omega} (z^2 - (S^{\Omega, N} u_1)^2) F 
&\geq F(r_0) \left( \int_B z^2 - \int_{S^N \Omega} (S^{\Omega, N} u_1)^2 \right) = 0
\end{align*}
having used Proposition \ref{prop:sym-preserves-norm}.  The case of $F$ increasing follows similarly.
\end{proof}

\section{Proof of Theorem}

Fix (for the duration of this paper) $\Omega$, $B = B_{\alpha,\Omega}$ as in Theorem \ref{theorem:chiti}, so that $\lambda_1(B) = \lambda_1(\Omega)/\alpha^2$.  Take as before $u_1$ for the first eigenfunction of $\Omega$, and $z$ the first eigenfunction of $B$.  We will sometimes abbreviate $\lambda_i = \lambda_i(\Omega)$.

If $P : \Omega \to \bR$ is any Lipschitz function such that $P u_1$ is $L^2$ orthogonal to $u_1$, then
\begin{equation}\label{eqn:min-max}
\int_\Omega |\nabla P|^2 u_1^2 \geq (\lambda_2(\Omega) - \lambda_1(\Omega)) \int_\Omega P^2 u_1^2
\end{equation}
by min-max ($Pu_1$ has the right boundary conditions) and integration by parts.  We cook up a collection of good test functions $P_i$.

Write $r_p(x) = \dist_M(p, x) = |\exp^{-1}_p(x)|$, and define $\sigma(r)$ by the condition
\[
|B_{\sigma(r)}(q)|_N = |B_r(p) \cap \hull\Omega|_M.
\]
In the notation of Proposition \ref{prop:sym-of-sym}, $\sigma(r) = (m_N^{-1} \circ m_{\hull\Omega, p})(r)$.

Let $h : \bR_+ \to \bR_+$ be a non-negative Lipschitz function with $h(0) = 0$.  For a given $p \in \hull\Omega$, define $P_p : \hull\Omega \to T_p M$ by
\[
P_p(x) = \frac{\exp_p^{-1}(x)}{r_p} h(\sigma(r_p)) .
\]

\begin{lemma}\label{lemma:base-point}
We can choose a $p \in \hull\Omega$ so that $\int_{\Omega} P_p(x) u_1^2(x) dx  = 0$.
\end{lemma}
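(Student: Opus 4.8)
The plan is to apply a topological/degree-theoretic argument to the map $p \mapsto \int_\Omega P_p(x)\, u_1^2(x)\, dx$, viewed as a vector field on the convex set $\hull\Omega$. First I would observe that, for fixed $x$, the vector $P_p(x) = \frac{\exp_p^{-1}(x)}{r_p(x)} h(\sigma(r_p(x)))$ lives in $T_pM$, so to make sense of "$\int_\Omega P_p u_1^2 = 0$" as a single equation across varying $p$ I need to transport these vectors to a common space; the natural device is to pull everything back to a fixed tangent space via $\exp$, or equivalently to work in the chart given by $\exp_{p_0}^{-1}$ for some reference point $p_0$, using that condition A) guarantees $\exp_{p_0}$ is a diffeomorphism onto $\hull\Omega$. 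Define $G(p) = \int_\Omega P_p(x)\, u_1^2(x)\, dx \in T_pM$ and let $\tilde G : \hull\Omega \to \bR^n$ be its expression in this chart. I want a zero of $\tilde G$.

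The key step is a boundary (or rather "outward-pointing") estimate: I claim that when $p \in \partial(\hull\Omega)$, the vector $G(p)$ points strictly "inward," i.e. it has positive inner product with the inward normal, or more robustly, that $G(p)$ never points in the direction that would let $\tilde G$ wind trivially. The geometric reason is that $\hull\Omega$ is strongly convex: for $p$ on the boundary, every $x \in \Omega \subset \hull\Omega$ lies on one side, so $\exp_p^{-1}(x)$ has a definite sign against the inward normal at $p$; since $h \geq 0$ and $h(\sigma(r_p))/r_p \geq 0$, the integrand $P_p(x) u_1^2(x)$ is a nonnegative combination of such vectors, and $u_1^2 > 0$ on $\Omega$, so $G(p) \cdot (\text{inward normal}) > 0$. (For $k > 0$ one uses condition A) to ensure $\exp_p^{-1}$ is well-defined and that convexity of $\hull\Omega$ behaves as in the nonpositively curved case within the relevant ball.) This rules out $G(p) = 0$ on the boundary and shows the vector field points inward along $\partial(\hull\Omega)$.

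Given the inward-pointing boundary behavior, a standard Brouwer-type argument produces an interior zero: either invoke that a continuous vector field on a closed ball that points inward on the boundary must vanish somewhere inside (equivalently, the map $p \mapsto p - \epsilon \tilde G(p)$ maps the convex body to itself for small $\epsilon$ and has a fixed point by Brouwer), or compute that the Brouwer degree of $\tilde G$ on $\hull\Omega$ is $\pm 1$. Either way we get $p \in \hull\Omega$ with $G(p) = 0$, which is exactly $\int_\Omega P_p u_1^2 = 0$. One should also note $p$ lies in the interior, and in fact (by the strict inequality above) one could even locate it in $\hull\Omega$ proper; whether $p \in \Omega$ itself is not needed since $P_p$ is defined on all of $\hull\Omega$.

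The main obstacle is making the ``transport to a common space'' and the continuity of $G$ precise: one must check that $p \mapsto P_p(x)$ (hence $p \mapsto G(p)$) is continuous, which requires continuity of $p \mapsto \exp_p^{-1}(x)$, of $r_p(x)$, and of $\sigma$ — the last depending on continuity of $m_{\hull\Omega,p}(r) = |B_r(p) \cap \hull\Omega|_M$ in both $p$ and $r$, and on $h$ being (Lipschitz) continuous with $h(0)=0$ so there is no singularity as $r_p \to 0$. The curvature bound $\Sect_M \le k$ and condition A) are what guarantee $\exp_p$ is a diffeomorphism onto $\hull\Omega$ for every $p \in \hull\Omega$, so the chart maps and the vector field are genuinely well-defined and smooth enough; once continuity and the inward-pointing boundary condition are in hand, the existence of a zero is immediate from degree theory.
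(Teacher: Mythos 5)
Your proposal is correct and takes essentially the same route as the paper: the paper likewise defines the vector field $X(p)=\int_\Omega P_p\,u_1^2$, uses the supporting--hyperplane property of the strongly convex set $\hull\Omega$ to show $X(p)\cdot \exp_p^{-1}(q)\le 0$ when $p$ is the nearest point to an exterior point $q$, and concludes with Brouwer. The only difference is how Brouwer is invoked: rather than the map $p\mapsto p-\epsilon \tilde G(p)$ (which requires a chart in which $\hull\Omega$ is actually convex, and strict inward-pointing), the paper flows along $X$ and uses a Gronwall-type estimate on $\dist(\cdot,\hull\Omega)$ to show the flow preserves $\hull\Omega$, then applies the fixed point theorem to the time-$t$ flow map --- an intrinsic formulation that needs only the non-strict inequality.
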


\begin{proof}
Define the vector field
\[
X(p) = \int_\Omega P_p u_1^2.
\]
We show the integral curves of $X$ define a mapping of $\hull\Omega$ to itself.  Since $\hull\Omega$ is convex and contained in the injectivity radius, $\hull\Omega$ is topologically a ball, and therefore $X$ must have a zero by the Brouwer fixed point Theorem.

Take $q \not\in \hull\Omega$, but near enough so $\exp_q$ is a diffeomorphism on $\hull\Omega$.  Let $p \in\hull\Omega$ be the nearest point to $p$.  By convexity, the vector $\exp_{p}^{-1}(q)$ defines a supporting hyperplane for $\hull\Omega$ at $p$.  In other words, 
\[
\exp_p^{-1}(\hull\Omega) \subset \{ v : v \cdot \exp_p^{-1}(q) \leq 0\}.
\]
By definition of $P$, we deduce $X(p) \cdot \exp_p^{-1}(q) \leq 0$ also.

Let $\phi_t(p)$ be the integral curves of $X(p)$, and define the function
\[
f(q) = \left\{ \begin{array}{l l} \dist(q, \hull\Omega) & q \not\in \hull\Omega \\ 0 & \text{else} \end{array} \right. .
\]
Since $X$ is Lipschitz we have by the above reasoning that
\[
\limsup_{t \to 0_+} \frac{f(\phi_{t}(p)) - f(p)}{t} \leq C f(p),
\]
and therefore $f(\phi_t(p)) = 0$ if $f(p) = 0$.  This shows $\phi_t$ maps $\hull\Omega$ into itself.
\end{proof}

Choose an orthonormal basis $\{e_i\}$ of $T_p M$.  Define
\[
P_i(x) = e_i \cdot P_p(x) ,
\]
where we choose and fix $p$ (as a function of $h$) as in Lemma \ref{lemma:base-point}.  So $\int_\Omega P_i u_1^2 = 0$ for each $i$, and by \eqref{eqn:min-max} we have
\[
\int_\Omega (\sum_i |\nabla P_i|^2) u_1^2 \geq (\lambda_2 - \lambda_1) \int_\Omega (\sum_i P_i^2) u_1^2 = (\lambda_2 - \lambda_1) \int_\Omega h^2(\sigma(r_p)) u_1^2 .
\]

For ease of notation, in the following we will write $g \equiv h\circ \sigma$ and $r \equiv r_p$, so that $P_i(x) = e_i\cdot \exp_p^{-1}(x) g(r)/r$.  We calculate
\begin{align*}
\frac{d}{ds} |_{s=0} P_i(\exp_p(v + sw)) 
&= \frac{d}{ds}|_{s = 0} \left( e_i \cdot (v + sw) \frac{g(|v + sw|)}{|v + sw|} \right) \\
&= e_i \cdot w \frac{g(|v|)}{|v|} + \frac{ (e_i \cdot v)(v \cdot w)}{|v|} {\frac{d}{dr}}|_{r = |v|} \frac{g(r)}{r}  .
\end{align*}

Choose an orthonormal basis $E_i$ at a fixed $x = \exp_p(v)$, such that $E_1 = \frac{\partial}{\partial r}$.  Write
\[
w_j = (D\exp_p|_v)^{-1}(E_j)
\]
and since $D\exp_p$ is a radial isometry $w_1 = \frac{v}{|v|}$.  We have
\begin{align*}
&E_1 P_i = e_i \cdot v \frac{g(r)}{r^2} + e_i \cdot v \left( \frac{g(r)}{r} \right)' \\
&E_j P_i = e_i \cdot w_j \frac{g(r)}{r} \quad \text{ $j > 1$}.
\end{align*}

Therefore
\begin{align*}
\sum_i |\nabla P_i|^2 
&= \sum_i (E_1 P_i)^2 + \sum_{j > 1, i} (E_j P_i)^2 \\
&= r^2 \left[ \frac{g(r)^2}{r^4} + 2 \frac{g(r)}{r^2} \left(\frac{g(r)}{r}\right)' + {\left(\frac{g(r)}{r}\right)'}^{2} \right] + \sum_{j > 1} |w_j|^2 \frac{g(r)^2}{r^2} \\
&= g'(r)^2 + \frac{g(r)^2}{r^2} \sum_{j>1} |w_j|^2 \\
&\leq g'(r)^2 + \frac{n-1}{\sn_k^2(r)} g(r)^2
\end{align*}
having used Rauch's theorem to deduce
\[
1 = |D\exp_p|_v(w_j)| \geq \frac{\sn_k(|v|)}{|v|} |w_j| .
\]

Recalling the definition $g = h \circ \sigma$, we estimate for a.e. $r \in r_p(\Omega)$,
\begin{align*}
g'(r)^2 + \frac{n-1}{\sn_k^2 r} g(r)^2
&= h'(\sigma(r))^2 \sigma'(r)^2 + \frac{n-1}{\sn_k^2 r} h(\sigma(r))^2 \\
&\leq C_1^2 \left( h'(\sigma(r))^2 + \frac{n-1}{\sn_k^2 \sigma(r)} h(\sigma(r))^2 \right) 
\end{align*}
where
\begin{equation}\label{eqn:c_1}
C_1 = \max_{r \in r_p(\Omega)} \left\{ \sigma'(r), \frac{\sn_k (\sigma(r))}{\sn_k (r)} \right\} .
\end{equation}

We obtain
\begin{theorem}\label{theorem:balancing-estimate}
For any Lipschitz $h: \bR_+ \to \bR_+$ with $h(0) = 0$, we can choose a point $p \in \hull\Omega$ so that
\[
(\lambda_2(\Omega) - \lambda_1(\Omega)) \int_{\Omega} u_1^2 h(\sigma(r_p))^2 \leq C_1^2 \int_\Omega u_1^2 F(\sigma(r_p)) .
\]
Here $F(t) = h'(t)^2 + \frac{n-1}{\sn_k^2(t)} h(t)^2$, and $C_1$ as in \eqref{eqn:c_1}.
\end{theorem}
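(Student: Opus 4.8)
The plan is to package the computation preceding the statement into the claimed inequality. First I invoke Lemma \ref{lemma:base-point} to fix a base point $p \in \hull\Omega$ with $\int_\Omega P_p u_1^2 = 0$; this is the only non-elementary input, and it is already supplied by the Brouwer fixed point argument there. With $p$ fixed, choose an orthonormal basis $\{e_i\}$ of $T_pM$ and set $P_i = e_i \cdot P_p$. Each $P_i$ is a bounded Lipschitz function on $\hull\Omega$: the map $\sigma = m_N^{-1}\circ m_{\hull\Omega,p}$ is Lipschitz (composition of Lipschitz maps, with $m_N^{-1}$ Lipschitz on the relevant range), and $P_p$ extends continuously by $0$ at $p$ since $h(0)=0=\sigma(0)$ forces $h(\sigma(r))/r$ to stay bounded near $r=0$. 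Hence $P_i u_1 \in H^1_0(\Omega)$, and the balancing condition says $P_i u_1 \perp u_1$ in $L^2(\Omega)$, so \eqref{eqn:min-max} applies to each $i$. Summing over $i$ and using $\sum_i P_i^2 = |P_p|^2 = h(\sigma(r_p))^2$ gives
\[
(\lambda_2(\Omega)-\lambda_1(\Omega))\int_\Omega h(\sigma(r_p))^2 u_1^2 \;\le\; \int_\Omega \Bigl(\sum_i |\nabla P_i|^2\Bigr)\, u_1^2 .
\]

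Next I bound the integrand on the right pointwise. Writing $g = h\circ\sigma$ and $r = r_p$, and computing in a frame $\{E_i\}$ at $x = \exp_p(v)$ with $E_1 = \frac{\partial}{\partial r}$ and $w_j = (D\exp_p|_v)^{-1}(E_j)$ for $j>1$, one finds — the cross terms in the $E_1$-contribution cancelling exactly — that
\[
\sum_i |\nabla P_i|^2 \;=\; g'(r)^2 + \frac{g(r)^2}{r^2}\sum_{j>1} |w_j|^2 .
\]
Since $\Sect_M \le k$ on $\hull\Omega$, Rauch's comparison theorem gives $1 = |D\exp_p|_v(w_j)| \ge \frac{\sn_k(r)}{r}|w_j|$ in the normal directions, so $\sum_{j>1}|w_j|^2 \le (n-1)\,r^2/\sn_k(r)^2$ and hence $\sum_i |\nabla P_i|^2 \le g'(r)^2 + \frac{n-1}{\sn_k^2 r}\, g(r)^2$.

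Finally, $g'(r) = h'(\sigma(r))\,\sigma'(r)$ for a.e.\ $r$, so the last bound equals $h'(\sigma(r))^2\sigma'(r)^2 + \frac{n-1}{\sn_k^2 r}h(\sigma(r))^2$; by the definition \eqref{eqn:c_1} of $C_1$, which gives $\sigma'(r) \le C_1$ and $\sn_k(r)^{-1} \le C_1\,\sn_k(\sigma(r))^{-1}$ for $r \in r_p(\Omega)$, this is at most $C_1^2\bigl(h'(\sigma(r))^2 + \frac{n-1}{\sn_k^2(\sigma(r))}h(\sigma(r))^2\bigr) = C_1^2 F(\sigma(r))$. Substituting into the first display — all integrals being over $\Omega$, where $u_1$ is supported — yields the theorem. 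Given the lemmas already in hand, the only real work is this bookkeeping: the boundary behaviour of $P_p$ at $p$, the Lipschitz regularity of $\sigma$, and the fact that assumption A) makes $\exp_p$ a diffeomorphism onto $\hull\Omega \supset \Omega$ so that the adapted-frame computation and the Rauch estimate are globally valid. I expect no genuine obstacle beyond Lemma \ref{lemma:base-point}, which is the heart of the matter.
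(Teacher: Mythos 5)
Your proposal is correct and follows essentially the same route as the paper: balance via Lemma \ref{lemma:base-point}, apply \eqref{eqn:min-max} to each $P_i = e_i\cdot P_p$, compute $\sum_i|\nabla P_i|^2$ in the adapted frame with the cross terms collapsing to $g'(r)^2$, bound the tangential part by Rauch, and absorb the change of variable $r\mapsto\sigma(r)$ into $C_1^2$. The only additions are the (correct) regularity remarks on $\sigma$ and on $P_p$ near $p$, which the paper leaves implicit.
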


\begin{corollary}\label{corollary:balancing-chiti-estimate}
If $h$, $p$ are as in Theorem \ref{theorem:balancing-estimate}, and $h$ further satisfies:
\begin{align*}
(\star) \left\{ \begin{array}{l}
\text{$h(r)$ is increasing} \\
\text{$F(r)$ is decreasing}
\end{array}\right. ,
\end{align*}
then
\[
(\lambda_2(\Omega) - \lambda_1(\Omega) ) \int_B z^2 h(r_q)^2 \leq C_1^2 \int_B z^2 F(r_q) .
\]
Here $B$ and $z$ are as in Theorem \ref{theorem:chiti}.
\end{corollary}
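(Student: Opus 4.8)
The plan is to push the inequality of Theorem \ref{theorem:balancing-estimate} forward to the model ball $B = B_{\alpha,\Omega}$ by two separate comparisons: an upper bound $\int_B z^2 h(r_q)^2 \le \int_\Omega u_1^2 h(\sigma(r_p))^2$ for the left-hand side, and an upper bound $\int_\Omega u_1^2 F(\sigma(r_p)) \le \int_B z^2 F(r_q)$ for the right-hand side; chaining these through Theorem \ref{theorem:balancing-estimate} gives the whole statement. Each comparison is a composition of a rearrangement step (Proposition \ref{prop:sym-ineq}) carried out on $\hull\Omega$ with a Chiti step (Corollary \ref{corollary:chiti}) carried out on $S^N\Omega$.

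First I would record the symmetrization identities, taking $D = \hull\Omega$ throughout and extending $u_1$ by zero to $\hull\Omega$ so that $\spt u_1 \subseteq \Omega \subseteq \hull\Omega$. Since $\sigma = m_N^{-1}\circ m_{\hull\Omega,p}$ is continuous and strictly increasing with $\sigma(0)=0$, we have $m_{\hull\Omega,p}^{-1}\circ m_N = \sigma^{-1}$. Hence, viewing $(h\circ\sigma\circ r_p)^2$ as an increasing function of $r_p$ (since $h$ is nonnegative and increasing) and $F\circ\sigma\circ r_p$ as a decreasing function of $r_p$ (since $F$ is decreasing, by $(\star)$), Proposition \ref{prop:sym-of-sym} yields
\[
S_{\hull\Omega,N}\big((h\circ\sigma\circ r_p)^2\big)(x) = h(r_q(x))^2, \qquad S^{\hull\Omega,N}(F\circ\sigma\circ r_p)(x) = F(r_q(x)) .
\]
Moreover $S^{\hull\Omega,N}(u_1^2) = (S^{\Omega,N}u_1)^2$ by Remark \ref{rem:decr-sym} and Proposition \ref{prop:sym-respects-powers}, and it is supported in $\overline{S^N\Omega}$, so any integral of $(S^{\Omega,N}u_1)^2$ against a function over $S^N\hull\Omega$ equals the same integral over $S^N\Omega$; recall also $B \subseteq S^N\Omega$ by weak Faber-Krahn (Theorem \ref{theorem:faber-krahn}). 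One should check that $F$ is genuinely bounded so that every integral converges: near $r = 0$ the term $h^2/\sn_k^2$ is $O(1)$ because $h(0) = 0$ and $h$ is Lipschitz, and $\sigma$ stays in the region $\sn_k > 0$ by hypothesis (B) on $\hull\Omega$.

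With these identities, the first comparison is the left inequality of Proposition \ref{prop:sym-ineq} with $f = u_1^2$ and $g = (h\circ\sigma\circ r_p)^2$, which reads $\int_{S^N\Omega}(S^{\Omega,N}u_1)^2 h(r_q)^2 \le \int_\Omega u_1^2 h(\sigma(r_p))^2$, followed by Corollary \ref{corollary:chiti} for the increasing radial weight $h(r_q)^2$, which gives $\int_B z^2 h(r_q)^2 \le \int_{S^N\Omega}(S^{\Omega,N}u_1)^2 h(r_q)^2$. The second comparison is dual: the right inequality of Proposition \ref{prop:sym-ineq} with the same $f$ and $g = F\circ\sigma\circ r_p$ gives $\int_\Omega u_1^2 F(\sigma(r_p)) \le \int_{S^N\Omega}(S^{\Omega,N}u_1)^2 F(r_q)$, and Corollary \ref{corollary:chiti} for the decreasing radial weight $F(r_q)$ gives $\int_{S^N\Omega}(S^{\Omega,N}u_1)^2 F(r_q) \le \int_B z^2 F(r_q)$. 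Combining these with Theorem \ref{theorem:balancing-estimate} produces
\[
(\lambda_2-\lambda_1)\int_B z^2 h(r_q)^2 \le (\lambda_2-\lambda_1)\int_\Omega u_1^2 h(\sigma(r_p))^2 \le C_1^2\int_\Omega u_1^2 F(\sigma(r_p)) \le C_1^2\int_B z^2 F(r_q),
\]
which is the assertion. I do not expect a real obstruction here --- the statement is essentially a packaging of the rearrangement and Chiti inequalities against the two monotone weights that $(\star)$ provides --- so the one thing needing care is the bookkeeping of the second paragraph: keeping track of which of $\Omega$, $\hull\Omega$, $S^N\Omega$, $S^N\hull\Omega$ each integral lives on, and verifying that passing between them is legitimate because the fixed factor $(S^{\Omega,N}u_1)^2$ is supported in $\overline{S^N\Omega}$.
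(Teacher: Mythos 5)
Your proposal is correct and follows essentially the same route as the paper: extend $u_1$ by zero to $\hull\Omega$, symmetrize on $\hull\Omega$ via Proposition \ref{prop:sym-ineq} (increasing symmetrization for the $h^2$ weight, decreasing for $F$), identify the symmetrized weights as $h(r_q)^2$ and $F(r_q)$ via Proposition \ref{prop:sym-of-sym}, apply Corollary \ref{corollary:chiti} in each direction, and chain through Theorem \ref{theorem:balancing-estimate}. The extra bookkeeping you supply (support of $(S^{\Omega,N}u_1)^2$ in $\overline{S^N\Omega}$, boundedness of $F$ near $r=0$) is a correct elaboration of steps the paper leaves implicit.
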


\begin{remark}
In Corollary \ref{corollary:balancing-chiti-estimate} we have still not used the lower Ricci curvature bound.
\end{remark}

\begin{proof}
Extend $u_1$ by $0$ to be define on $\hull\Omega$, and recall that Remark \ref{rem:decr-sym} implies
\begin{equation}\label{eqn:sym-extended-u}
S^{\hull\Omega, N}u_1 \equiv S^{\Omega, N} u_1.
\end{equation}
We calculate
\begin{align*}
\int_{\Omega} u_1^2 F(\sigma(r_p)) 
&\leq \int_{S^N\hull\Omega} (S^{\hull\Omega,N} u_1)^2 (S^{\hull,N} (F \circ \sigma \circ r_p)) \\
&= \int_{S^N \Omega} (S^{\Omega, N} u_1)^2 F(r_q) \\
&\leq \int_B z^2 F(r_q) .
\end{align*}
In the first line we used Proposition \ref{prop:sym-ineq}; in the second line we used Proposition \ref{prop:sym-of-sym}, the definition of $\sigma(r)$, and \eqref{eqn:sym-extended-u}; in the third we used Corollary \ref{corollary:chiti}.

Using the same Theorems in the same order, but since $h$ is increasing, we have
\begin{align*}
\int_{\Omega} u_1^2 h(\sigma(r_p))^2 
&\geq \int_{S^N \hull\Omega} (S^{\hull\Omega, N} u_1)^2 (S_{\hull,N} (h \circ \sigma\circ r_p))^2 \\
&= \int_{S^N \Omega} (S^{\Omega, N} u_1)^2 h(r_q)^2 \\
&\geq \int_B z^2 h(r_q)^2 .
\end{align*}

Now plug these calculations into Theorem \ref{theorem:balancing-estimate}.
\end{proof}

\begin{proof}[Proof of Theorem \ref{theorem:eigenvalue-estimate}]
Recall that $B_{\alpha,\Omega} = B_R(q)$ was the geodesic ball in $N^n(k)$ with first eigenvalue $\lambda_1(B_{\alpha,\Omega}) = \lambda_1(\Omega)/\alpha^2$, and $z = z(r_q)$ was its first eigenfunction.  Let $J = J(r_q)$ be the radial component of the second Dirchlet eigenfunction of $B$ (c.f. equation 2.11 of \cite{ashbaugh-benguria}, section 3 of \cite{benguria-linde}, section 3 of \cite{ashbaugh-benguria-sphere}).

Notice that when $k > 0$, the assumption $|\hull\Omega|_M < |N|_N/2$ implies $S^N\Omega \supset B$ lies in the hemisphere.

Define
\[
h(t) = \left\{ \begin{array}{l l}
 \frac{J(t)}{z(t)} & t \in [0, R) \\
 \lim_{s \to R_-} w(s) & t \geq R \end{array} \right.
\]
Using Corollary 3.4 of \cite{ashbaugh-benguria} (if $k = 0$), Lemma 7.1 in \cite{benguria-linde} (if $k < 0$), or Theorem 4.1 in \cite{ashbaugh-benguria-sphere} (if $k > 0$), we deduce that $h(t)$ is increasing, and $F(t) = h'(t)^2 + \frac{n-1}{\sn_k^2(t)} h(t)^2$ is decreasing.

We can therefore apply Theorem \ref{corollary:balancing-chiti-estimate} to deduce
\[
(\lambda_2(\Omega) - \lambda_1(\Omega)) \leq C_1^2 (\lambda_2(B_{\alpha,\Omega}) - \lambda_1(B_{\alpha,\Omega})) ,
\]
with $C_1$ as in \eqref{eqn:c_1}.

We show that
\[
C_1 \leq \frac{|\partial B_{\diam \Omega}|_{N(K)}}{|\partial B_{\diam \Omega}|_{N(k)} }.
\]
For ease of notation write $m_\ell(r) = |B_r|_{N(\ell)}$.  All balls in $M$ are centered at $p$, and balls in $N(k)$, $N(K)$ are centered at $q$, $\tilde q$ (resp.).

Suppose $C_p$ is a geodesic cone in $M$, centered at $p$, with solid angle $\gamma n\omega_n$ in $T_pM$.  If $\Ric_M \geq (n-1)K$ on $B_r \cap C_p$, then by the Bishop-Gromov volume comparison we have
\[
|\partial B_r \cap C_p|_M \leq \gamma |\partial B_r|_{N(K)} .
\]

Conversely, choosing a linear isometry $\iota : T_pM \to T_q N(k)$, take 
\[
C'_p = (\exp^{N(k)}_q \circ \iota \circ (\exp^M_p)^{-1})(C_p)
\]
to be a geodesic cone in $N(k)$ with the same cone angle as $C_p$.  Since $\Sect_M \leq k$ we have by Hessian comparision that
\[
|B_r \cap C_p|_M \geq |B_r \cap C'_p|_{N(k)} = \gamma |B_r|_{N(k)} .
\]

Recall that $\sigma(r) = m^{-1}_k (|B_r(p) \cap \hull\Omega|_M)$.  Notice that
\[
B_r(p)\cap \hull\Omega \supset B_r(p) \cap C_p
\]
where $C_p$ is a geodesic cone at $p$ over $\partial B_r(p) \cap \hull\Omega$.  Therefore
\begin{align*}
\sigma'(r)
&= \frac{1}{m'_k (m^{-1}_k (|B_r \cap \hull\Omega|_M))} |\partial B_r \cap \hull\Omega|_M \\
&\leq \frac{1}{m'_k ( m^{-1}_k (|B_r\cap C_p|_M))} |\partial B_r \cap C_p|_M \\
&\leq \frac{1}{m'_k (m^{-1}_k (\gamma |B_r|_N))} \gamma |\partial B_r|_{N(K)} \\
&\leq \frac{|\partial B_r|_{N(K)}}{|\partial B_r|_{N(k)}}.
\end{align*}
The last inequality follows because the isoperimetric profile $A_{n,k}(s) = m'_k ( m_k^{-1} (s))$ is concave.  We elaborate.  The last inequality is equivalent to
\[
m'_k (m_k^{-1} (s)) \leq \frac{m'_k ( m^{-1}_k (\gamma s))}{\gamma}
\]
for any $\gamma \in (0, 1]$.  But the RHS is a dilation of the graph of the LHS, hence the inequality follows if the graph is concave.  We calculate
\[
(m'_k\circ m^{-1}_k)'' = \frac{(m'_k\circ m^{-1}_k) (m'''_k\circ m^{-1}_k) - (m''_k\circ m^{-1}_k)^2}{(m'_k\circ m^{-1}_k)^3} .
\]
Since
\[
(m'_k m'''_k - (m''_k)^2)(r) = -(n-1) n^2\omega_n^2 \sn_k(r)^{2n-4} \leq 0,
\]
the graph is concave (here again we use that $S^N \Omega$ lies in the hemisphere of $N(k)$, if $k > 0$).

We prove now the inequality
\[
\frac{\sn_k (\sigma(r))}{\sn_k (r)} \leq \frac{|\partial B_r|_{N(K)}}{|\partial B_r|_{N(k)}} .
\]
Since $\sigma(r) \leq m^{-1}_k (m_K ( r) )$, it suffices to prove the inequality
\[
m_K (r) \leq m_k \left[ \sn_k^{-1} \left( \frac{m'_K (r)}{m'_k (r)} \sn_k (r) \right) \right] .
\]

We therefore calculate
\begin{align*}
m_k\left[\sn_k^{-1}\left( \frac{m'_K(r)}{m'_k(r)} \sn_k(r) \right) \right] 
&= m_k\left[ \sn_k^{-1} \left( \sn_K(r) \left(\frac{\sn_K(r)}{\sn_k(r)}\right)^{n-2} \right) \right] \\
&\geq m_k\left[ \sn_k^{-1} (\sn_K(r)) \right] \\
&= n\omega_n \int_0^{\sn_k^{-1}(\sn_K(r))} \sn_k(\rho)^{n-1} d\rho \\
&= n\omega_n \int_0^r \sn_K(\rho)^{n-1} \sqrt{\frac{1-K \sn_K(\rho)^2}{1-k\sn_K(\rho)^2}} d\rho \\
&\geq n\omega_n \int_0^r \sn_K(\rho)^{n-1} d\rho \\
&= m_K(r) ,
\end{align*}
using that $\sn_k'(r)^2 = 1-k\sn_k(r)^2$.
\end{proof}

\bibliographystyle{alpha}

\end{document}